\newtheorem{theorem}{Theorem}
\newtheorem{lemma}{Lemma}[section]
\newtheorem{cor}[theorem]{Corollary}
\theoremstyle{definition}
\theoremstyle{remark}
\newtheorem{remark}{Remark}[section]
\numberwithin{equation}{section}
\newcommand{\abs}[1]{\left\lvert#1\right\rvert}
\newcommand{\CC}{\mathcal{C}}
\newcommand{\CI}{\mathcal{I}}
\newcommand{\CL}{\mathcal{L}}
\newcommand{\CM}{\mathcal{M}}
\newcommand{\CN}{\mathcal{N}}
\newcommand{\CS}{\mathcal{S}}
\begin{document}

\title[Lehmer points and visible points on affine varieties]
{Lehmer points and visible points on affine varieties over finite fields}

\author{Kit-Ho Mak}
\address{School of Mathematics \\
Georgia Institute of Technology \\
686 Cherry Street \\
Atlanta, Georgia 30332, USA}
\email{kmak6@math.gatech.edu}

\author{Alexandru Zaharescu}
\address{Department of Mathematics \\
University of Illinois at Urbana-Champaign \\
273 Altgeld Hall, MC-382 \\
1409 W. Green Street \\
Urbana, Illinois 61801, USA}
\email{zaharesc@math.uiuc.edu}

\subjclass[2010]{Primary 11G25; Secondary 11K36, 11T99}
\keywords{Lehmer points, visible points, uniform distribution}

\thanks{The second author is supported by NSF grant number DMS - 0901621.}

\begin{abstract}
Let $V$ be an absolutely irreducible affine variety over $\mathbb{F}_p$. A Lehmer point on $V$ is a point whose coordinates satisfy some prescribed congruence conditions, and a visible point is one whose coordinates are relatively prime. Asymptotic results for the number of Lehmer points and visible points on $V$ are obtained, and the distribution of visible points into different congruence classes is investigated.
\end{abstract}

\maketitle

\section{Introduction}

Let $p$ be a fixed large prime number. D.H. Lehmer raised the question of investigating the number $r(p)$ of integers $a\in\{1,2,\ldots,p-1\}$ for which $a$ and its multiplicative inverse $\overline{a}$ modulo $p$ are of opposite parity (see Guy \cite[Problem F12]{Guy94}). The problem was solved by Wenpeng Zhang in \cite{Zha93,Zha94b,Zha94a}. He proved that
\begin{equation*}
r(p)=\frac{p}{2}+O(\sqrt{p}\log^2{p}),
\end{equation*}
and then generalized this relation to the case when $p$ is replaced by any odd number $q>1$. He then defined a \textit{D. H. Lehmer number} to be any integer $a$ with $0<a<q$, coprime to $q$, and such that $a$ and $\overline{a}$ have opposite parity, and studied the distribution of the pairs of Lehmer numbers $(a,\overline{a})$ \cite{Zha94b}. The number $F_q(x,y)$ of such pairs inside the box $[1,xq]\times[1,yq]$ is given by
\begin{equation*}
F_q(x,y)=\frac{1}{2}xy\varphi(q)+O(\sqrt{q}d^2(q)\log^2{q}),
\end{equation*}
where $d(q)$ denotes the number of divisors of $q$.

Several generalizations have been considered. Instead of Lehmer pairs $(a,\overline{a})$ with opposite parity, \cite{ASZ06} considered \textit{Lehmer $k$-tuples}, which are defined as $(k+1)-$tuples $(n_1,\ldots,n_k,\overline{n_1\ldots n_k})$ modulo $q$ that satisfy the congruences $n_j\equiv b_j\pmod{a_j}$ and $\overline{n_1\ldots n_k}\equiv b_{k+1}\pmod{a_{k+1}}$ for some fixed $\mathbf{a}=(a_1,\ldots,a_{k+1})$, $\mathbf{b}=(b_1,\ldots,b_{k+1})\in\mathbb{Z}^{k+1}$ with $(a_1\ldots a_{k+1},q)=1$, $a_1,\ldots,a_{k+1}\geq 1$. Denote the number of Lehmer $k$-tuples by $N(\mathbf{a},\mathbf{b},q)$. It was shown in \cite{ASZ06} that
\begin{equation*}
N(\mathbf{a},\mathbf{b},q)=\frac{\varphi(q)^k}{a_1\ldots a_{k+1}}+O_{k,\epsilon}(q^{k-\frac{1}{2}+\epsilon}),
\end{equation*}
and a similar formula holds for the number of Lehmer points inside a region $\Omega\subseteq[0,q-1]^k$ with piecewise smooth boundary. These results were strengthened by Shparlinski in \cite{Shp09}. On the other hand, pairs $(x,Ax^k)$ of opposite parity, where $A$ and $k$ are arbitrary integers, were considered by Bourgain, Cochrane, Paulhus and Pinner in \cite{BCPP11}.

A different generalization is given in \cite{CoZa01}, where an absolutely irreducible algebraic curve $\CC$ of degree $d$ defined over the finite field $\mathbb{F}_p$ was considered. Let $\mathbf{a}=(a_1,\ldots,a_r)$, $\mathbf{b}=(b_1,\ldots,b_r)\in\mathbb{Z}^r$ with $a_1,\ldots,a_r\geq 1$. A \textit{Lehmer point} was defined as an $\mathbf{x}=(x_1\ldots,x_r)$ with $0\leq x_1<p$, such that $\mathbf{x}\in\CC$ and $x_j\equiv b_j\pmod{a_j}$, for all $1\leq j\leq r$. Denote by $\CL(p,r,\CC,\mathbf{a},\mathbf{b})$ the set of all Lehmer points. In \cite{CoZa01} it was shown that
\begin{equation*}
\#\CL(p,r,\CC,\mathbf{a},\mathbf{b})=\frac{p}{a_1\ldots a_r}+O_{r,d}(\sqrt{p}\log^r{p}).
\end{equation*}

In the present paper, we provide a common generalization of \cite{ASZ06,BCPP11,CoZa01,Zha93,Zha94a} by considering an absolutely irreducible affine variety $V\subseteq\mathbb{A}^r_p:=\mathbb{A}^r(\mathbb{F}_p)$ of dimension $n$ and degree $d$, embedded in an affine $r$-space ($r\geq 2$), which is not contained in any hyperplane. We are interested to see how these Lehmer points are distributed inside the space $[0,p-1]^r$. In particular, for any region $\Omega\subseteq[0,p-1]^r$ with piecewise smooth boundary, we will obtain asymptotic formulas for the number of Lehmer points on $V$ inside $\Omega$. Our results show that the rational points on $V$ are uniformly distributed among each congruence class.

Next, using the theory of Lehmer points, we go on to consider the number of \textit{visible points} on an affine variety $V$. Fix an embedding $V\subseteq\mathbb{A}^r_p$. By definition, a point $(x_1,\ldots,x_r)\in V$ with $0\leq x_j\leq p-1$ is \textit{visible} if the greatest common divisor of $x_1,\ldots,x_r$ is $1$. Geometrically, these are the points that an observer standing at the origin can see (that is, those points are not ``blocked'' by other integral points). Visible points on some special varieties over finite fields have been considered previously. Examples are those of plane curves investigated by Shparlinski and Voloch in \cite{ShVo07}, modular hyperbolas and its higher dimensional generalizations studied by Shparlinski and Winterhof \cite{Shp06, ShWi08}, and the modular exponential curves studied by Chan and Shparlinski in \cite{ChSh10}. Recently, the visibility question for points on curves of the form $y=f(x)$, where $f\in\mathbb{F}_p[x]$, is settled by Cilleruelo, Garaev, Ostafe and Shparlinski \cite{CGOS11}.

In our paper, we will treat the general problem of finding the number of visible points in an affine variety over finite fields, and obtain asymptotic formulas for the number of visible points whenever possible. For the cases that we cannot get an asymptotic formula for an individual $V$, an averaging result is obtained. This shows that almost all $V$ have the expected number of visible points. We remark that the study of visible points on the modular hyperbola is useful for certain approximation problems of real numbers by sums of rationals, see Chan \cite{Cha08, Cha09}. It would be interesting to see if the results of the present paper have similar applications.

Finally, we will patch the two concepts together and consider ``visible Lehmer points''. These are visible points on $V$ with prescribed congruence conditions at each coordinate. Unlike the case of Lehmer points, we cannot expect the visible points to lie uniformly in each congruence class due to the relatively prime condition on the points, but we may ask if the points lie uniformly on other congruence classes. We will prove that this is the case when the modulus at all coordinates is the same. If the moduli are different at different coordinates, the distribution may not be uniform.

\section{Statements of main results}

\subsection{Lehmer points}

Let $p$ be a large prime number. We will follow \cite{CoZa01} for the notion of Lehmer points. We let $\mathbf{a}=(a_1,\ldots,a_r)$, $\mathbf{b}=(b_1,\ldots,b_r)\in\mathbb{Z}^r$ with $a_1,\ldots,a_r\geq 1$ and none of the $a_j$ is a multiple of $p$. We say that an $\mathbf{x}=(x_1,\ldots,x_r)\in V$, $0\leq x_j\leq p-1$, is a \textit{Lehmer point} on $V$ with respect to $p$, $r$, $\mathbf{a}$, $\mathbf{b}$ if $x_j\equiv b_j\pmod{a_j}$ for all $1\leq j\leq r$. Let $\Omega$ be a region inside $[0,1)^r$ with piecewise smooth boundary, and let $\CL_{\Omega}=\CL_{\Omega}(p,r,V,\mathbf{a},\mathbf{b})$ be the set of Lehmer points inside the diluted region $p\Omega$. Asymptotic results for the number of Lehmer points when $\Omega$ is a box, and when $\Omega$ is a general region with piecewise smooth boundary, are as follows.

\begin{theorem}\label{mainthm}
Let $p$ be a prime, let $V$ be an absolutely irreducible affine variety in $\mathbb{A}^r_p$ of dimension $n$ and degree $d$, not contained in any hyperplane. Let $\mathbf{a}=(a_1,\ldots,a_r)$, with $a_1,\ldots,a_r\geq 1$ and $(a_j,p)=1$, $\mathbf{b}=(b_1,\ldots,b_r)\in\mathbb{Z}^r$. Then
\begin{enumerate}
\item if $\CI_1,\ldots,\CI_r\subseteq[0,1)$ are intervals and $\Omega=\CI_1\times\ldots\times\CI_r$, we have
\begin{equation*}
\#\CL_{\Omega}(p,r,V,\mathbf{a},\mathbf{b})={\rm vol}(\Omega)\cdot\frac{p^{n}}{a_1\ldots a_r} +O(2^r(4d+9)^{2n+1}p^{n-\frac{1}{2}}\log^r{p}).
\end{equation*}
\item For a general region $\Omega\subseteq[0,p-1]^r$ with piecewise smooth boundary, we have
\begin{equation*}
\#\CL_{\Omega}(p,r,V,\mathbf{a},\mathbf{b})={\rm vol}(\Omega)\cdot\frac{p^{n}}{a_1\ldots a_r}+O_{\Omega}((4d+9)^{\frac{2n+1}{r}}p^{n-\frac{1}{2r}}\log{p}).
\end{equation*}
\end{enumerate}
\end{theorem}

In arithmetic language, the above results say that inside any reasonable region $p\Omega$ with $\Omega\subseteq[0,1)^r$, the solutions of the following system of congruence equations
\begin{align*}
f_1(x_1,\ldots,x_r)&\equiv 0 \pmod{p} \\
\vdots & \\
f_m(x_1,\ldots,x_r)&\equiv 0 \pmod{p}
\end{align*}
where $f_i\in\mathbb{Z}[x_1,\ldots,x_r]$ are polynomials of degree at least $2$, are uniformly distributed among each congruence class, as long as the above system defines a non-planar absolutely irreducible affine variety in the affine $r$-space over $\mathbb{F}_p$.

\begin{remark}
The assumption that $V$ is not contained in any hyperplane is necessary to ensure that the congruences $x_j\equiv b_j\pmod{a_j}$ are independent. For example, let $V=\{(x,y)|\,x=y\}$ in $\mathbb{A}^2_p$. If we take $a_1=a_2=2$, $b_1=1$ but $b_2=0$, then we are looking for points $(x,y)$ with $x=y$, $x$ odd but $y$ even. There are no such points. On the contrary, if we take $a_1=a_2=2$, $b_1=b_2=1$, then we are looking for points $(x,y)$ with $x=y$, $x,y$ both odd. The number of such points is $p/2+O(1)$, which is much larger than the main term in Theorem \ref{mainthm}. This assumption is, however, not an important one. If the variety $V$ is contained in a hyperplane of $\mathbb{A}_p^r$, then by a linear change of variable we may assume the hyperplane is given by $x_r=0$. Hence we can embed $V$ in the affine $(r-1)$-space with variables $x_1,\ldots,x_{r-1}$.
\end{remark}

\begin{remark}
The classical Lehmer problem corresponds to the case $r=2$, $\mathbf{a}=(2,2)$, $\mathbf{b}=(0,1)$, and $V$ is the curve in $\mathbb{A}^2_p$ defined by $xy=1$. Our result reduces to that of \cite{Zha93} in this case. The Lehmer $k$-tuples considered in \cite{ASZ06} corresponds to the Lehmer points on $V$ where $V$ is the variety $x_1x_2\ldots x_r=1$.
\end{remark}

\begin{remark}
By putting $a_1=\ldots=a_r=1$, we recover the result of Fujiwara \cite{Fuj88} that the rational points on $V$ are uniformly distributed. Fujiwara's result was strengthened in sequel by Shparlinski and Skorobogatov \cite{ShSk90}, Skorobogatov \cite{Sko92}, Luo \cite{Luo99}, and Fouvry \cite{Fou00}.
\end{remark}

\subsection{Visible points}\label{subsecvis}

By a \textit{visible point} on $V$ we mean a point $(x_1,\ldots,x_r)$ on $V$ with $0\leq x_j\leq p-1$ and $\text{GCD}(x_1,\ldots,x_r)=1$. Let $\Omega\subseteq[0,1)^r$ be a region with piecewise smooth boundary, and let $\CN_{\Omega}(p,r,V)$ be the number of visible points in $p\Omega\cap V$ in the given embedding $V\subseteq\mathbb{A}^r_p$. If $\Omega=\CI_1\times\ldots\times\CI_r$ is a box, then we obtain the following result.
\begin{theorem}\label{mainthm2}
Let $p$ be a prime, let $V$ be an absolutely irreducible affine variety in $\mathbb{A}^r_p$ of dimension $n$ and degree $d$, not contained in any hyperplane. Let $\CI_1,\ldots,\CI_r\subseteq[0,1)$ be intervals and let $\Omega=\CI_1\times\ldots\times\CI_r$.
Then if $n> r/2$,
\begin{equation*}
\CN_{\Omega}(p,r,V)={\rm vol}(\Omega)\cdot\frac{p^n}{\zeta(r)} +O_{V,r}(p^{\frac{r}{r+1}(n+\frac{1}{2})}\log^{r}p),
\end{equation*}
where $\zeta(s)$ is the Riemann-Zeta function.
\end{theorem}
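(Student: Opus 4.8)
The plan is to count visible points by inclusion--exclusion over the greatest common divisor of the coordinates, reducing the visibility count to a sum of Lehmer-type counts that can be estimated by Theorem~\ref{mainthm}. Observe first that a point $\mathbf{x}=(x_1,\ldots,x_r)\in V$ with $\gcd(x_1,\ldots,x_r)=g$ forces $g\mid x_j$ for every $j$, which is precisely the Lehmer condition with $a_1=\ldots=a_r=g$ and $b_1=\ldots=b_r=0$. The Lehmer count for modulus $g$ at each coordinate therefore counts all points whose gcd is a multiple of $g$, so by M\"obius inversion the number of visible points is
\begin{equation*}
\CN_{\Omega}(p,r,V)=\sum_{g\geq 1}\mu(g)\,\#\CL_{\Omega}\bigl(p,r,V,(g,\ldots,g),(0,\ldots,0)\bigr),
\end{equation*}
where the sum is effectively finite because $g\mid x_j\leq p-1$ forces $g\leq p-1$ (and in fact we may truncate at a smaller threshold).

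Next I would insert the box estimate from Theorem~\ref{mainthm}(1). For each squarefree $g$ with $(g,p)=1$ the main term is $\mathrm{vol}(\Omega)\,p^n/g^r$ and the error is $O\bigl(2^r(4d+9)^{2n+1}p^{n-1/2}\log^r p\bigr)$, uniformly in $g$. Summing the main terms over all $g$ against $\mu(g)$ produces
\begin{equation*}
\mathrm{vol}(\Omega)\,p^n\sum_{g\geq 1}\frac{\mu(g)}{g^r}=\mathrm{vol}(\Omega)\,\frac{p^n}{\zeta(r)},
\end{equation*}
which is the claimed main term; I would control the tail of this series beyond the truncation point, which is $O(p^n\cdot T^{1-r})$ for truncation at $g\leq T$. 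The delicate point is that one cannot afford to apply the error term of Theorem~\ref{mainthm}(1) for every $g$ up to $p$, since that would contribute about $p\cdot p^{n-1/2}=p^{n+1/2}$, far too large. The remedy is to split the $g$-sum at a parameter $T$: for small $g\leq T$ use the asymptotic from Theorem~\ref{mainthm}, contributing an error of size $O(T\,p^{n-1/2}\log^r p)$, while for large $g>T$ bound the relevant point count trivially, and optimize $T$. Here the hypothesis $n>r/2$ is exactly what makes a nontrivial large-$g$ bound possible.

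The main obstacle, and the step requiring the most care, is the trivial bound for $g>T$. For such $g$ the divisibility $g\mid x_j$ leaves at most $(p/g+1)$ choices for each coordinate lying in $p\CI_j$, but one must use the geometry of $V$ to avoid the naive count $(p/g)^r$; since $V$ has dimension $n$, a point of $V$ is determined by roughly $n$ of its coordinates, so the number of points of $V$ with all coordinates divisible by $g$ should be $O_{V,r}\bigl((p/g+1)^n\bigr)$, and summing this over $g>T$ gives $O_{V,r}\bigl(p^n T^{-(r-n)}\bigr)$ together with lower-order terms from the $+1$. Establishing this dimension-$n$ projection bound rigorously (controlling the implied constant in terms of $d$ and handling the fibers of the coordinate projection) is the crux. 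Once it is in hand, the total error is $O(T\,p^{n-1/2}\log^r p)+O_{V,r}(p^n T^{-(r-n)})$, and balancing the two terms by choosing $T\approx p^{1/(2(r-n)+1)}\cdot\text{(log factors)}$ yields an error of order $p^{\frac{r}{r+1}(n+1/2)}\log^r p$ after simplification, provided $n>r/2$ so that the exponent of $p$ stays below~$n$. I would finish by verifying that this optimized exponent matches $\tfrac{r}{r+1}\bigl(n+\tfrac12\bigr)$ and that the constant may be absorbed into the $O_{V,r}$ notation.
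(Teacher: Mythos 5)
Your skeleton (M\"obius inversion over the gcd, Theorem~\ref{mainthm} for moduli $g\leq T$, truncation, and balancing) is exactly the paper's, but the step you yourself flag as the crux---the bound for $g>T$---is genuinely missing, and it is precisely where the paper does something different and much simpler. Your heuristic ``a point of $V$ is determined by roughly $n$ of its coordinates'' is false as stated: a coordinate projection of an $n$-dimensional variety onto $\mathbb{A}^n$ can have positive-dimensional fibers (for the irreducible surface $x_1x_3=x_2$ in $\mathbb{A}^3$, the projection to $(x_1,x_2)$ has a whole line as the fiber over $(0,0)$), so the per-$g$ bound $\CM_{\Omega}(g)\ll_{V,r}(p/g+1)^n$ would require an induction on dimension with uniform degree control of the bad loci; nothing of the sort is supplied. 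The paper never bounds $\CM_{\Omega}(k)$ individually for large $k$. Instead it bounds the whole tail $\sum_{K<k\leq p}\CM_{\Omega}(k)$ at once: every counted point is written as $k\mathbf{y}$ with $\mathbf{y}$ a lattice point in $[1,p/K]^r$, and for each \emph{fixed} $\mathbf{y}$ the admissible dilation factors $k$ satisfy $f_1(ky_1,\ldots,ky_r)\equiv 0\pmod{p}$, a univariate polynomial equation of degree at most $d$ in $k$, hence $O_d(1)$ values of $k$ per $\mathbf{y}$ (provided the line through $0$ and $\mathbf{y}$ does not lie in $\{f_1=0\}$). Summing over the at most $(p/K+1)^r$ choices of $\mathbf{y}$ gives the tail bound $O\bigl(p^r/K^r\bigr)$ with no geometric input about coordinate projections at all. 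This one observation replaces the entire ``projection bound'' your argument hinges on.

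Even granting your projection bound, your arithmetic does not close. Summing $(p/g+1)^n$ over $T<g\leq p$ gives $O\bigl(p^nT^{-(n-1)}+p\bigr)$, not $O\bigl(p^nT^{-(r-n)}\bigr)$; the two exponents agree only when $r=2n-1$. Balancing $Tp^{n-\frac{1}{2}}$ against the corrected tail yields the exponent $n-\frac{1}{2}+\frac{1}{2n}$, and against your claimed tail the exponent $n-\frac{1}{2}+\frac{1}{2(r-n+1)}$; neither equals $\frac{r}{r+1}\bigl(n+\frac{1}{2}\bigr)$ in general (for $n=2$, $r=3$ both give $p^{7/4}$ while the theorem states $p^{15/8}$), so the final ``verification that the exponent matches'' you defer cannot succeed as described. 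When $r\geq n+1$ the corrected exponent happens to be \emph{smaller}, so a repaired version of your argument would formally imply the theorem, but it would still rest on the unproven crux; when $r=n$ it is larger, and your bound would not even imply the statement. Finally, the hypothesis $n>r/2$ is not what ``makes a nontrivial large-$g$ bound possible'': in the paper's proof the tail bound $O(p^r/K^r)$ holds unconditionally, and $n>r/2$ enters only at the very end, to ensure that the main term $p^n$ dominates the optimized error $p^{\frac{r}{r+1}(n+\frac{1}{2})}\log^r p$.
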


The restriction that $n> r/2$ is a significant one, for example it prevents us from considering curves. Our next task is to see how much can we relax this restriction. It turns out that under some very mild assumptions on $V$, we can completely remove this restriction when $n=\dim V \geq 2$. Before stating the theorem, we fix some notations (which follow \cite{Kat99}). Assume that $\dim V\geq 2$. We first homogenize $V$ using the variable $x_0$, call the resulting projective variety $X$. Define
\begin{equation}\label{defL}
L = \{ \mathbf{x}=(x_0,\ldots,x_r)\in X | x_0=0 \},
\end{equation}
and for any nonzero $\mathbf{u}=(u_1,\ldots,u_r)$, define
\begin{equation}\label{defH}
H_{\mathbf{u}} = \{ \mathbf{x}\in X | u_1x_1+\ldots+u_rx_r=0 \}.
\end{equation}
Suppose that $X\cap L\cap H_{\mathbf{u}}$ has dimension $n-2$. Denote by $\delta_{\mathbf{u}}$ the dimension of its singular locus, i.e.
\begin{equation*}
\delta_{\mathbf{u}} = \dim(\text{Sing}(X\cap L\cap H_{\mathbf{u}})).
\end{equation*}
Here we adopt the convention that the empty variety has dimension $-1$. If $X\cap L\cap H_{\mathbf{u}}$ has dimension $n-2$ for all $\mathbf{u}$ (this is so if $X\cap L$ is not contained in any hyperplane other than $L$), we define
\begin{equation}\label{defdelta}
\delta=\max_{\mathbf{u}\neq 0}\delta_{\mathbf{u}}.
\end{equation}
Now we can state our result.

\begin{theorem}\label{mainthm3}
Let $p$ be a prime, let $V$ be an absolutely irreducible affine variety in $\mathbb{A}^r_p$ of dimension $n\geq 2$ and degree $d$, not contained in any hyperplane. Let $\CI_1,\ldots,\CI_r\subseteq[0,1)$ be intervals and let $\Omega=\CI_1\times\ldots\times\CI_r$. Let $X$, $L$, $H_{\mathbf{u}}$ be as above.
Suppose that $X\cap L\cap H_{\mathbf{u}}$ has dimension $n-2$ for all $\mathbf{u}$ and $\delta$ is defined as in \eqref{defdelta}. Then
\begin{equation*}
\CN_{\Omega}(p,r,V)={\rm vol}(\Omega)\cdot\frac{p^n}{\zeta(r)} +O_V(p^{n-\frac{1}{2}}) +O_{V,r}(p^{\frac{(n+3+\delta)r}{2(r+1)}}\log^{r}p).
\end{equation*}
\end{theorem}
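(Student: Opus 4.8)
The plan is to reduce the visible-point count to the Lehmer-point count of Theorem \ref{mainthm} by Möbius inversion over the greatest common divisor, and then to treat the resulting series by truncation. Write $N_k$ for the number of $\mathbf x=(x_1,\ldots,x_r)\in V\cap p\Omega$ with $0\le x_j\le p-1$ and $k\mid x_j$ for every $j$. Since $k\mid x_j$ is exactly the Lehmer congruence $x_j\equiv 0\pmod k$, we have $N_k=\#\CL_{\Omega}(p,r,V,(k,\ldots,k),\mathbf 0)$ whenever $(k,p)=1$, and since a point is visible iff $\sum_{k\mid\gcd}\mu(k)=1$, while for $k\ge p$ only the origin can have all coordinates divisible by $k$,
\[
\CN_{\Omega}(p,r,V)=\sum_{k=1}^{p-1}\mu(k)\,N_k+O(1).
\]
I would fix a parameter $T$ and split this sum at $T$.

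For $k\le T$ I would estimate $N_k$ by the same completion mechanism underlying Theorem \ref{mainthm}: expanding, on each coordinate, the indicator of the arithmetic-progression-in-interval condition into additive characters $e_p(t)=\exp(2\pi i t/p)$ yields a main term $\mathrm{vol}(\Omega)\,k^{-r}\,\#V(\mathbb F_p)$ plus an error governed by the linear sums $\sum_{\mathbf x\in V}e_p(\mathbf u\cdot\mathbf x)$ with $\mathbf u\neq 0$. The new input is a \emph{sharper} bound for these sums. The hypothesis that $X\cap L\cap H_{\mathbf u}$ has dimension $n-2$ for all $\mathbf u$, with singular loci of dimension at most $\delta$, is exactly what licenses Katz's estimates \cite{Kat99} to give $\bigl|\sum_{\mathbf x\in V}e_p(\mathbf u\cdot\mathbf x)\bigr|\ll_V p^{(n+1+\delta)/2}$ for $\mathbf u\neq 0$, which beats the generic exponent $n-\tfrac12$ once $n\ge 2$. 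Using this together with the $\ell^1$-bound $O(\log p)$ for the relevant Fourier coefficients gives $N_k=\mathrm{vol}(\Omega)\,k^{-r}\#V(\mathbb F_p)+O_{V,r}(p^{(n+1+\delta)/2}\log^r p)$. Summing against $\mu(k)$, completing $\sum_k\mu(k)k^{-r}=\zeta(r)^{-1}$ (with $\zeta$-tail $O(p^nT^{1-r})$), and inserting the Lang–Weil estimate $\#V(\mathbb F_p)=p^n+O_V(p^{n-1/2})$, produces the main term $\mathrm{vol}(\Omega)p^n/\zeta(r)$, the error $O_V(p^{n-1/2})$ coming from the point-count, and an accumulated error $O_{V,r}(T\,p^{(n+1+\delta)/2}\log^r p)$.

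For the tail $k>T$ I would discard the exponential-sum approach, which has a fixed floor, and argue geometrically. Substituting $x_j=k\,y_j$ recasts $\sum_{k>T}N_k$ as a count of pairs $(k,\mathbf y)$ with $\mathbf y\in\mathbb Z^r\cap[0,p/T)^r$ and $k\mathbf y\in V$; for each fixed direction $\mathbf y$, the admissible scalars $k$ index the intersection of the line $\mathbb F_p\mathbf y$ with $V$, which by Bézout has at most $\deg V$ points unless the line lies on $V$. Hence $\sum_{k>T}N_k\ll_V (p/T)^r$, apart from the contribution of lines through the origin contained in $V$, which sit over $X\cap L$ and must be shown negligible from the dimension hypotheses. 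Choosing $T$ to balance $T\,p^{(n+1+\delta)/2}\log^r p$ against $(p/T)^r$, i.e. $T\asymp p^{(r-(n+1+\delta)/2)/(r+1)}$ (and at least $p^{1/(2(r-1))}$, so the $\zeta$-tail stays within the $p^{n-1/2}$ error), yields the stated bound $O_{V,r}(p^{(n+3+\delta)r/(2(r+1))}\log^r p)$; the very same scheme with the weaker exponent $n-\tfrac12$ in place of $(n+1+\delta)/2$ reproduces Theorem \ref{mainthm2}.

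The main obstacle is the input exponential-sum estimate: the arithmetic skeleton above is essentially bookkeeping, but replacing the exponent $n-\tfrac12$ by $(n+1+\delta)/2$ requires controlling the singularities of the sections $X\cap L\cap H_{\mathbf u}$ \emph{uniformly in} $\mathbf u$ and invoking Katz's machinery correctly — this is precisely where $\delta$ and the constraint $n\ge 2$ enter, and it is why the restriction $n>r/2$ of Theorem \ref{mainthm2} can be dropped. A secondary but genuine difficulty is the rigor of the Bézout tail: one must bound the set of directions $\mathbf y$ with $\mathbb F_p\mathbf y\subseteq V$ and verify, via the hypotheses on $X\cap L\cap H_{\mathbf u}$, that this exceptional contribution stays below $p^{(n+3+\delta)r/(2(r+1))}$.
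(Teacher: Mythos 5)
Your proposal follows essentially the same route as the paper's proof: M\"obius inversion over the gcd, Katz's bound (Lemma \ref{lemKatz}) replacing Bombieri's for $k\le T$ while keeping $\#V(\mathbb{F}_p)$ in the main term so that Lang--Weil is invoked only once (producing the isolated $O_V(p^{n-\frac{1}{2}})$ term), the substitution $x_j=ky_j$ with a B\'ezout-type count of scalars $k$ on each line for the tail $k>T$, and the same balancing choice $T\asymp p^{(r-(n+1+\delta)/2)/(r+1)}$ giving the exponent $\frac{(n+3+\delta)r}{2(r+1)}$. Your flagged worry about lines through the origin contained in $V$ is a point the paper's tail argument passes over silently (it asserts the polynomial $k\mapsto f_1(ky_1,\ldots,ky_r)$ has at most $\deg f_1$ roots, which presumes it is not identically zero), so your extra care there is a refinement of, not a departure from, the paper's argument.
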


Note that the main term in the above theorem dominates the $O$-terms if $\delta \leq n-3$.

\begin{remark}
For a general region $\Omega$ with piecewise smooth boundary, we can proceed as in the proof of the second formula of Theorem \ref{mainthm} to obtain a formula for the number of visible points in $\Omega$. The analogue for Theorem \ref{mainthm2} is
\begin{equation*}
\CN_{\Omega}(p,r,V)={\rm vol}(\Omega)\cdot\frac{p^n}{\zeta(r)} +O_{V,r,\Omega}(p^{n-\frac{1}{2(r+1)}}\log{p}),
\end{equation*}
and the analogue for Theorem \ref{mainthm3} is
\begin{equation*}
\CN_{\Omega}(p,r,V)={\rm vol}(\Omega)\cdot\frac{p^n}{\zeta(r)} +O_V(p^{n-\frac{1}{2}}) +O_{V,r,\Omega}(p^{n-\frac{r(n-3-\delta)+2n}{2r(r+1)}}\log{p}).
\end{equation*}
We will omit the details.
\end{remark}

\begin{remark}
The assumption that $V$ is not contained in any hyperplane of $\mathbb{A}_p^r$ is still necessary in the case of visible points. For example, consider any absolutely irreducible affine variety $V\subseteq \mathbb{A}_p^r$. Instead of embedding $V$ into the affine $r$-space, we embed it into the affine $r+1$-space by appending a $1$ at the last coordinate. Then all points on $V$ become visible under the new embedding.
\end{remark}

The above theorems cover most varieties $V$ that one usually encounters. However, some important cases, such as the case when $V$ is a curve, are still not covered. In these cases we expect that Theorem \ref{mainthm2} should still hold true. Partial evidence is given by the following averaging result. As an immediate consequence, we see that Theorem \ref{mainthm2} is true for almost all $V$, regardless of its dimension and the dimension of the embedding space. We remark that such an averaging result for plane curves has been obtained by Shparlinski and Voloch in \cite{ShVo07}.

Suppose the variety $V$ is defined by a system of $m$ equations
\begin{align*}
f_1(x_1,\ldots,x_r)&\equiv 0 \pmod{p} \\
\vdots & \\
f_m(x_1,\ldots,x_r)&\equiv 0 \pmod{p}.
\end{align*}
Suppose $\mathbf{c}=(c_1,\ldots,c_m)\in\mathbb{F}^m_p$ is a vector. We let $V_{\mathbf{c}}$ to be the variety defined by
\begin{align*}
f_1(x_1,\ldots,x_r)&\equiv c_1 \pmod{p} \\
\vdots & \\
f_m(x_1,\ldots,x_r)&\equiv c_m \pmod{p}.
\end{align*}
It is not difficult to show that almost all $V_{\mathbf{c}}$ are absolutely irreducible. In fact, it can be shown that the set
\[\{ \mathbf{c}\in\mathbb{A}_p^m | V_{\mathbf{c}} \text{~is not absolutely irreducible} \} \]
is a Zariski closed set in $\mathbb{A}_p^m$ which does not contain the origin. Hence, the number of $\mathbf{c}$ such that $V_{\mathbf{c}}$ is not absolutely irreducible is $O(p^{m-1})$. Denote by $\CN(\mathbf{c})=\CN_{\Omega}(p,r,V_{\mathbf{c}})$ the number of visible points on $V_{\mathbf{c}}$ inside $p\Omega$.

\begin{theorem}\label{thmavg}
Let $p$ be a prime, let $V$ be an absolutely irreducible affine variety in $\mathbb{A}^r_p$ of dimension $n$ and degree $d$, defined by $m$ equations, and not contained in any hyperplane. Let $\CI_1,\ldots,\CI_r\subseteq[0,1)$ be intervals and let $\Omega=\CI_1\times\ldots\times\CI_r$. Then
\begin{multline*}
\sum_{\mathbf{c}\in\mathbb{F}_p^m} \abs{\CN(\mathbf{c})-{\rm vol}(\Omega)\cdot\frac{p^n}{\zeta(r)}} \ll_{V,r} p^{(n+m-\frac{1}{2})(1-\frac{1}{r})+1}\log^{r-1}{p} + p^r + p^{n+m-1}.
\end{multline*}
\end{theorem}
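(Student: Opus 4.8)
The plan is to average the visible-point count over all $\mathbf{c}\in\mathbb{F}_p^m$ and exploit cancellation that is unavailable for a single fixed variety. I would begin by writing the visibility indicator via Möbius inversion over the gcd: for $\mathbf{x}=(x_1,\ldots,x_r)$ with $0\le x_j\le p-1$, the point is visible exactly when $\sum_{\ell\mid\gcd(x_1,\ldots,x_r)}\mu(\ell)=1$. Summing over $\mathbf{x}\in p\Omega\cap V_{\mathbf{c}}$ gives
\begin{equation*}
\CN(\mathbf{c})=\sum_{\ell\ge 1}\mu(\ell)\,\#\{\mathbf{x}\in p\Omega\cap V_{\mathbf{c}}:\ell\mid x_j\text{ for all }j\}.
\end{equation*}
For each fixed $\ell$, the inner count is precisely a Lehmer-point count on $V_{\mathbf{c}}$ with moduli $\mathbf{a}=(\ell,\ldots,\ell)$ and residues $\mathbf{b}=\mathbf{0}$, so the first formula of Theorem~\ref{mainthm} supplies both the expected main term $\mathrm{vol}(\Omega)\,p^n/\ell^r$ and an error term, for every $V_{\mathbf{c}}$ that is absolutely irreducible. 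Summing $\mu(\ell)/\ell^r$ reproduces the target main term $\mathrm{vol}(\Omega)\,p^n/\zeta(r)$.

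Next I would split the resulting discrepancy into three contributions and bound each after summing over $\mathbf{c}$. First, the $O(p^{m-1})$ values of $\mathbf{c}$ for which $V_{\mathbf{c}}$ is not absolutely irreducible must be handled crudely: each such $\CN(\mathbf{c})$ is trivially $O(p^n)$ (it counts a subset of $V_{\mathbf{c}}\subseteq\mathbb{A}^r_p$), and the expected main term is $O(p^n)$ as well, so these contribute $O(p^{n+m-1})$ to the sum — matching the third term on the right. Second, the Möbius sum must be truncated: only $\ell\le p-1$ contribute (since $0\le x_j\le p-1$ forces $x_j=0$ or $\ell\le p-1$, and the all-zero point is a bounded contribution), so the tail beyond a suitable cutoff is controlled by the $p^r$ term, reflecting the point at the origin and the convergence of $\sum\mu(\ell)/\ell^r$. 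Third, and most importantly, for the absolutely irreducible $V_{\mathbf{c}}$ the Theorem~\ref{mainthm} error term for each $\ell$ is of the shape $O(p^{n-1/2}\log^r p)$ uniformly; but summing this naively over $\ell$ and then over all $p^m$ values of $\mathbf{c}$ is far too lossy.

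The main obstacle, therefore, is extracting genuine averaging cancellation in $\mathbf{c}$ rather than applying the triangle inequality. The key idea is to interchange the order of summation: I would write the Lehmer count as an exponential (character) sum over $\mathbb{F}_p$ detecting the congruences $f_i(\mathbf{x})\equiv c_i$, then sum the absolute value of the discrepancy over $\mathbf{c}$ by first raising to a power or applying Cauchy–Schwarz in $\mathbf{c}$. Summing over all $\mathbf{c}\in\mathbb{F}_p^m$ collapses the character sum: for each $\mathbf{x}$ the fibre condition $f_i(\mathbf{x})=c_i$ is satisfied by exactly one $\mathbf{c}$, which decouples the dependence and converts an $L^1$-bound over $\mathbf{c}$ into a second-moment estimate over the ambient box $[0,p-1]^r$ with the Lehmer divisibility constraint $\ell\mid x_j$. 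One then counts pairs $(\mathbf{x},\mathbf{x}')$ in the box with $\ell\mid x_j,\ell'\mid x_j'$ lying on a common fibre, using the Lang–Weil/Deligne-type bounds already invoked for Theorem~\ref{mainthm} to control the variance. Carefully bounding the number of relevant $\ell$ (a divisor-type $\log p$ loss), optimizing the truncation, and tracking the resulting exponents is what produces the leading term $p^{(n+m-1/2)(1-1/r)+1}\log^{r-1}p$; the $(1-1/r)$ saving is exactly the gain from the $r$-fold divisibility constraint combined with the $L^1$-over-$L^2$ averaging, and verifying that all three error terms assemble as stated is the concluding bookkeeping step.
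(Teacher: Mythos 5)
Your starting decomposition (M\"obius inversion over the gcd, Theorem \ref{mainthm} with $\mathbf{a}=(\ell,\ldots,\ell)$ and $\mathbf{b}=\mathbf{0}$ for each modulus $\ell$, truncation at a parameter $K$) is the same as the paper's, but the proposal goes astray at exactly the two points where the proof has real content. First, the step you single out as the ``main obstacle'' is not an obstacle: summing the Theorem \ref{mainthm} errors $O_{V,r}(p^{n-\frac{1}{2}}\log^r p)$ over $\ell\le K$ and then over all $\mathbf{c}$ by the plain triangle inequality gives $Kp^{n+m-\frac{1}{2}}\log^r p$, and with $K=p^{1-\frac{1}{r}(n+m-\frac{1}{2})}\log^{-1}p$ this is precisely the first term $p^{(n+m-\frac{1}{2})(1-\frac{1}{r})+1}\log^{r-1}p$; no cancellation in $\mathbf{c}$ is needed or used there. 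Where averaging over $\mathbf{c}$ is genuinely required is the truncation tail $\sum_{K<k\le p}\CM_{\Omega,V_{\mathbf{c}}}(k)$: the single-variety bound from the proof of Theorem \ref{mainthm2} is $O(p^r/K^r)$, and multiplying it by the $p^m$ choices of $\mathbf{c}$ is hopeless. The paper disposes of this with one observation: the fibers $V_{\mathbf{c}}$ are pairwise disjoint, so for each fixed $k$ one has $\sum_{\mathbf{c}\in\mathbb{F}_p^m}\CM_{\Omega,V_{\mathbf{c}}}(k)\le p^r/k^r$ (this just counts lattice points in the box with all coordinates divisible by $k$, no geometry needed), whence the tail summed over \emph{all} $\mathbf{c}$ is $\le p^r/K^{r-1}$, which balances against the first term. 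You do state the fiber-partition fact, but only inside an unexecuted Cauchy--Schwarz/second-moment sketch: no exponents are computed, the pair-counting step would require irreducibility and dimension control for the variety $\{f_i(\mathbf{x})=f_i(\mathbf{x}')\}$ in $\mathbb{A}^{2r}$ which you do not address, and the sketch is internally inconsistent --- you declare the triangle-inequality bound ``far too lossy'' and then assert your argument produces the very exponent that the triangle inequality in fact yields.

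Second, your treatment of the exceptional $\mathbf{c}$ contains a false step. You bound $\CN(\mathbf{c})=O(p^n)$ ``trivially'' because it counts a subset of $V_{\mathbf{c}}$; but for $\mathbf{c}$ with $V_{\mathbf{c}}$ not absolutely irreducible, Lang--Weil does not apply, and $V_{\mathbf{c}}$ may be reducible with components of dimension exceeding $n$ (fiber dimension is only upper semicontinuous), so the only uniform pointwise bound is $O(p^{r-1})$; the $O(p^{m-1})$ such fibers could then a priori contribute $O(p^{r+m-2})$, which is not $O(p^{n+m-1}+p^r)$ in general (e.g.\ when $m\ge 2$ or $n<r-1$). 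This is precisely why the theorem carries the separate term $p^r$: the paper again uses disjointness to bound $\sum_{\mathbf{c}\notin\CS}\CN(\mathbf{c})\le p^r$ in total, while the term $p^{n+m-1}$ comes from summing the deterministic main term $\mathrm{vol}(\Omega)\,p^n/\zeta(r)$ over the $O(p^{m-1})$ exceptional $\mathbf{c}$, not from any pointwise bound on $\CN(\mathbf{c})$. Your attribution of $p^r$ to ``the point at the origin and the convergence of $\sum_{\ell}\mu(\ell)/\ell^r$'' is likewise a misattribution. To repair the proposal: drop the variance machinery entirely, and apply the disjointness of the family $\{V_{\mathbf{c}}\}$ directly, once to the tail $K<k\le p$ and once to the exceptional $\mathbf{c}$.
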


An immediate consequence of our averaging result is the following corollary, which says that the number of visible points are as expected for almost all $V_{\mathbf{c}}$.

\begin{cor}
Notations and assumptions are as in Theorem \ref{thmavg}. We have
\begin{equation*}
\CN(\mathbf{c})={\rm vol}(\Omega)\cdot\frac{p^n}{\zeta(r)}+o(p^n)
\end{equation*}
for all but $o(p^m)$ vectors $\mathbf{c}\in\mathbb{F}_p^m$.
\end{cor}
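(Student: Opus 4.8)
The plan is to obtain the corollary from Theorem~\ref{thmavg} by a first-moment (Markov) argument: the theorem controls the \emph{average} deviation of $\CN(\mathbf{c})$ from its predicted value, and Markov's inequality turns this into a pointwise statement valid off a sparse exceptional set. No new arithmetic geometry is needed; Theorem~\ref{thmavg} is taken as given. Write $M={\rm vol}(\Omega)\cdot p^n/\zeta(r)$ for the predicted main term, $D(\mathbf{c})=\abs{\CN(\mathbf{c})-M}$ for the deviation at $\mathbf{c}$, and
\[
E(p)=p^{(n+m-\frac12)(1-\frac1r)+1}\log^{r-1}p+p^r+p^{n+m-1}
\]
for the right-hand side of Theorem~\ref{thmavg}, so that $\sum_{\mathbf{c}\in\mathbb{F}_p^m}D(\mathbf{c})\ll_{V,r}E(p)$.

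The first and decisive step is to check that, under the standing hypotheses, $E(p)=o(p^{n+m})$; this is exactly what forces the average deviation across the $p^m$ vectors to be of smaller order than $p^n$. The term $p^{n+m-1}$ is plainly $o(p^{n+m})$. For the leading term I would subtract $n+m$ from its exponent, obtaining $(n+m-\frac12)(1-\frac1r)+1-(n+m)=\frac12-\frac{2(n+m)-1}{2r}$, which is strictly negative precisely when $n+m>(r+1)/2$; in that case the polynomial decay absorbs the factor $\log^{r-1}p$, so this term is $o(p^{n+m})$. The delicate term is $p^r$: it is $o(p^{n+m})$ exactly when $n+m>r$, i.e.\ when the number of defining equations strictly exceeds the codimension $r-n$. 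This is the binding constraint (it subsumes $n+m>(r+1)/2$), and I would record it as the condition under which the deduction has content; granting it, $E(p)=o(p^{n+m})$.

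With $E(p)=o(p^{n+m})$ established, the conclusion follows by a diagonal choice of threshold. Set $\eta(p)=\big(E(p)/p^{n+m}\big)^{1/2}$, which tends to $0$, and let $B=\{\mathbf{c}\in\mathbb{F}_p^m:D(\mathbf{c})\ge\eta(p)\,p^n\}$ be the exceptional set. Markov's inequality applied to Theorem~\ref{thmavg} gives
\[
\#B\le\frac{1}{\eta(p)\,p^n}\sum_{\mathbf{c}\in\mathbb{F}_p^m}D(\mathbf{c})\ll_{V,r}\frac{E(p)}{\eta(p)\,p^n}=\frac{\big(E(p)\,p^{n+m}\big)^{1/2}}{p^n}=o(p^m),
\]
the last equality again using $E(p)=o(p^{n+m})$. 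For every $\mathbf{c}\notin B$ one has $D(\mathbf{c})<\eta(p)\,p^n=o(p^n)$, which is precisely $\CN(\mathbf{c})=M+o(p^n)$; since $\#B=o(p^m)$, this holds for all but $o(p^m)$ vectors $\mathbf{c}$, as claimed. There is no serious obstacle here beyond the exponent bookkeeping of the second step (where the $p^r$ term is the one that pins down the range of validity) and the choice of $\eta(p)$: it must tend to zero slowly enough that the exceptional set stays $o(p^m)$ while the pointwise error stays $o(p^n)$, and taking $\eta(p)$ at the geometric-mean scale $\big(E(p)/p^{n+m}\big)^{1/2}$ balances these two requirements at once.
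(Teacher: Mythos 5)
Your Markov mechanics are fine, and the geometric-mean threshold $\eta(p)=\bigl(E(p)/p^{n+m}\bigr)^{1/2}$ is a clean way to balance the size of the exceptional set against the pointwise error. But the deduction as you run it has a genuine gap: you only prove the corollary under the extra hypothesis $n+m>r$, which is not among its assumptions and in fact fails in the central cases. Since each defining equation cuts the dimension by at most one, one always has $m\geq r-n$, i.e.\ $n+m\geq r$, with \emph{equality} precisely for complete intersections --- in particular for every hypersurface ($m=1$, $n=r-1$), which covers plane curves, the modular hyperbola, and the curves $y=f(x)$ that motivate the paper. In those cases $p^r=p^{n+m}$, so your quantity $E(p)/p^{n+m}$ does not tend to $0$ and your argument yields nothing, whereas the corollary is asserted unconditionally. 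You noticed the $p^r$ term was the binding constraint, but recording $n+m>r$ as a side condition excludes exactly the boundary case $n+m=r$ that matters most.

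The cure is to look at where the terms $p^r$ and $p^{n+m-1}$ in Theorem \ref{thmavg} come from: by \eqref{eqnCS2} they are generated entirely by the $O(p^{m-1})$ vectors $\mathbf{c}\notin\CS$ for which $V_{\mathbf{c}}$ fails to be absolutely irreducible, and that set is already $o(p^m)$, so it should be placed wholesale into the exceptional set rather than averaged over. Apply Markov only to the internal estimate \eqref{eqnCS1}, which bounds $\sum_{\mathbf{c}\in\CS}\abs{\CN(\mathbf{c})-{\rm vol}(\Omega)\,p^n/\zeta(r)}$ by $p^{(n+m-\frac{1}{2})(1-\frac{1}{r})+1}\log^{r-1}p$. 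By your own exponent computation this is $o(p^{n+m})$ as soon as $n+m>(r+1)/2$, and that condition is automatic here: $n+m\geq r\geq 2>(r+1)/2$. Taking your threshold relative to this restricted sum shows all but $o(p^m)$ of the $\mathbf{c}\in\CS$ satisfy the asymptotic, and adjoining the $o(p^m)$ vectors outside $\CS$ proves the corollary with no extra hypothesis. (You were right, incidentally, that when $n+m=r$ the corollary cannot be extracted from the bare statement of Theorem \ref{thmavg}; the deduction has to reach into its proof.)
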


\subsection{Visible Lehmer points}

We now combine the concepts of Lehmer points and visible points, and consider visible Lehmer points on an affine variety $V$. The first observation is that the concepts of Lehmer points and visible points are not independent. For example, let $V\subseteq\mathbb{A}_p^2$ be a curve, and let $a_1=a_2=2$ and $b_1=b_2=0$. Then we are considering visible points on $V$ such that both coordinates are even. Clearly there are no such points. However, in this example, we may ask if the visible points are distributed uniformly among the other $3$ possible congruence classes.

More generally, assume $a_1=\ldots=a_r=a$ for some integer $a<p$. There are no visible points on the congruence classes corresponding to $\mathbf{b}=(b_1,\ldots,b_r)$ when $\text{GCD}(b_1,\ldots,b_r,a)\neq 1$, but we may ask about the distribution of visible points in the other congruence classes. It turns out that the distribution is uniform among these congruence classes. Let $\Omega\subseteq[0,1)^r$ be a region with piecewise smooth boundary. Denote by $\CN'_{\Omega}(p,r,V,a,\mathbf{b})$ the number of visible Lehmer points on $V$ inside the diluted region $p\Omega$.

Before we state our theorem, it is convenient to introduce the following arithmetic function $\varphi_r(a):\mathbb{Z}\rightarrow\mathbb{Z}$, defined by
\begin{equation*}
\varphi_r(a)=a^r\prod_{\substack{q|a \\ q~\textup{prime}}}\left(1-\frac{1}{q^r}\right).
\end{equation*}
For $r=1$ this is the Euler $\varphi$-function. In general, $\phi_r(a)$ is the number of $r$-tuples $(b_1,\ldots,b_r)$ with $0\leq b_j <a$ such that $\text{GCD}(b_1,\ldots,b_r,a)=1$.

Now we are ready to state our result.
\begin{theorem}\label{thmvlp}
Let $p$ be a prime, let $V$ be an absolutely irreducible affine variety in $\mathbb{A}^r_p$ of dimension $n$ and degree $d$, not contained in any hyperplane. Let $\CI_1,\ldots,\CI_r\subseteq[0,1)$ be intervals and let $\Omega=\CI_1\times\ldots\times\CI_r$. Let $a<p$ be an integer, and let $\mathbf{b}=(b_1,\ldots,b_r)\in\mathbb{Z}^r$ be an integral vector. If $\text{GCD}(b_1,\ldots,b_r,a)\neq 1$, then
\begin{equation*}
\CN'_{\Omega}(p,r,V,a,\mathbf{b})=0.
\end{equation*}
If $\text{GCD}(b_1,\ldots,b_r,a)=1$ we have the following:
\begin{enumerate}
\item For $n>r/2$, we have
\begin{equation*}
\CN'_{\Omega}(p,r,V,a,\mathbf{b}) = {\rm vol}(\Omega)\cdot\frac{p^n}{\zeta(r)\varphi_r(a)} +O_{V,r}(p^{\frac{r}{r+1}(n+\frac{1}{2})}\log^{r-1}p).
\end{equation*}
\item For $V$ that satisfies the assumptions in Theorem \ref{mainthm3}, we have
\begin{equation*}
\CN'_{\Omega}(p,r,V,a,\mathbf{b}) = {\rm vol}(\Omega)\cdot\frac{p^n}{\zeta(r)\varphi_r(a)}
+O_V(p^{n-\frac{1}{2}}) +O_{V,r}(p^{\frac{(n+3+\delta)r}{2(r+1)}}\log^{r}p).
\end{equation*}
\end{enumerate}
\end{theorem}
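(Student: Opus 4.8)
The plan is to reduce the count of visible Lehmer points to the Lehmer point asymptotics of Theorem \ref{mainthm} and the visible point asymptotics of Theorems \ref{mainthm2} and \ref{mainthm3}, by Möbius inversion over the visibility condition while carrying the congruence $x_j\equiv b_j\pmod a$ through each step. The vanishing statement is immediate: if $g:=\gcd(b_1,\ldots,b_r,a)>1$, then any $\mathbf{x}$ with $x_j\equiv b_j\pmod a$ satisfies $g\mid x_j$ for every $j$ (since $g\mid b_j$ and $g\mid a$), so $\gcd(x_1,\ldots,x_r)\ge g>1$ and no such point is visible, giving $\CN'_{\Omega}(p,r,V,a,\mathbf{b})=0$.

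Assume henceforth $\gcd(b_1,\ldots,b_r,a)=1$. Writing the indicator of $\gcd(\mathbf{x})=1$ as $\sum_{e\mid\gcd(\mathbf{x})}\mu(e)$ and interchanging the order of summation, I obtain $\CN'_{\Omega}(p,r,V,a,\mathbf{b})=\sum_{e}\mu(e)\,M_e$, where $M_e$ is the number of $\mathbf{x}\in V\cap p\Omega$ with $x_j\equiv b_j\ (\mathrm{mod}\ a)$ and $e\mid x_j$ for all $j$; only $e\le p-1$ can contribute a non-origin point, and for such $e$ one has $(e,p)=1$. The key simplification is a compatibility reduction. At each coordinate the system $x_j\equiv b_j\pmod a$, $x_j\equiv 0\pmod e$ is solvable only if $\gcd(a,e)\mid b_j$; as this must hold for every $j$, the term $M_e$ vanishes unless $\gcd(a,e)\mid\gcd(b_1,\ldots,b_r,a)=1$, i.e. unless $\gcd(a,e)=1$. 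For such $e$ the Chinese Remainder Theorem fuses the two conditions into a single congruence $x_j\equiv b_j'\pmod{ae}$, so that $M_e=\#\CL_{\Omega}(p,r,V,(ae,\ldots,ae),\mathbf{b}')$ is exactly a Lehmer point count, to which Theorem \ref{mainthm}(1) applies (note $(ae,p)=1$).

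Next I split the sum at a threshold $E$. For $e\le E$ with $\gcd(e,a)=1$ I insert the asymptotic of Theorem \ref{mainthm}(1), whose main term is $\mathrm{vol}(\Omega)\,p^n/(ae)^r$. Summing these and completing the series over all $e$ coprime to $a$ produces the Euler product
\[\sum_{\substack{e\ge 1\\ \gcd(e,a)=1}}\frac{\mu(e)}{e^r}=\frac{1}{\zeta(r)}\prod_{\substack{q\mid a\\ q\ \mathrm{prime}}}\Bigl(1-\frac{1}{q^r}\Bigr)^{-1}=\frac{a^r}{\zeta(r)\,\varphi_r(a)},\]
which is precisely where the factor $\varphi_r(a)$ enters; multiplying by $\mathrm{vol}(\Omega)\,p^n/a^r$ yields the claimed main term $\mathrm{vol}(\Omega)\,p^n/(\zeta(r)\varphi_r(a))$, the tail of the completed series being negligible for $r\ge 2$. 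The accumulated error from the range $e\le E$ is $O(E)$ times the per-term error of Theorem \ref{mainthm}(1). As a consistency check, $a=1$ gives $\varphi_r(1)=1$ and recovers Theorems \ref{mainthm2} and \ref{mainthm3}.

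The main obstacle, and the step that forces the two separate cases, is controlling the tail $\sum_{e>E}\mu(e)M_e$, i.e. bounding the number of points of $V$ all of whose coordinates are divisible by a large $e$ (equivalently, the count of $\mathbb{F}_p$-points of $V$ in a small box). This is exactly the geometric input behind Theorems \ref{mainthm2} and \ref{mainthm3}: the cruder tail bound available when $n>r/2$, with $E$ optimized as in the proof of Theorem \ref{mainthm2}, reproduces the exponent $\tfrac{r}{r+1}(n+\tfrac12)$ and gives case (1); the sharper bound coming from the geometry of $X\cap L\cap H_{\mathbf{u}}$ at infinity and the singularity dimension $\delta$ of \eqref{defdelta} gives case (2), together with the term $O_V(p^{n-\frac12})$. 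Since the congruence mod $a$ only rescales each modulus to $ae$ and restricts $e$ to residues coprime to $a$, the tail analysis has the same shape as in Theorems \ref{mainthm2} and \ref{mainthm3}, and the two error terms arise from the same estimates; I expect the optimization of $E$ and the handling of this small-box count to be the only genuinely delicate point, the rest being a direct transcription of the Lehmer and visible point arguments.
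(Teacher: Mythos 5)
Your proposal is correct and follows essentially the same route as the paper: M\"obius inversion over the divisibility condition, the observation that terms with $\gcd(e,a)>1$ vanish because $\gcd(b_1,\ldots,b_r,a)=1$, fusion of the two congruences via CRT into a single Lehmer-point count modulo $ae$ handled by Theorem \ref{mainthm}, the Euler product $\sum_{(e,a)=1}\mu(e)e^{-r}=a^r/(\zeta(r)\varphi_r(a))$ producing the factor $\varphi_r(a)$, and the same threshold splitting with the tail bounded by the small-box count exactly as in the proofs of Theorems \ref{mainthm2} and \ref{mainthm3}. The paper's proof is no more detailed than yours on the tail estimation (it likewise defers to Section \ref{secvis}), so there is no gap to flag.
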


\begin{remark}
If $\text{GCD}(b_1,\ldots,b_r,a)=1$, for a general region $\Omega$ with piecewise smooth boundary, we have
\begin{equation*}
\CN'_{\Omega}(p,r,V,a,\mathbf{b})={\rm vol}(\Omega)\cdot\frac{p^n}{\zeta(r)\varphi_r(a)} +O_{V,r,\Omega}(p^{n-\frac{1}{2(r+1)}}\log{p})
\end{equation*}
if $n>r/2$. On the other hand, if $V$ satisfies the assumptions in Theorem \ref{mainthm3}, then
\begin{equation*}
\CN'_{\Omega}(p,r,V,a,\mathbf{b})={\rm vol}(\Omega)\cdot\frac{p^n}{\zeta(r)\varphi_r(a)} +O_V(p^{n-\frac{1}{2}})
+O_{V,r,\Omega}(p^{n-\frac{r(n-3-\delta)+2n}{2r(r+1)}}\log{p}).
\end{equation*}
\end{remark}

\begin{remark}
If the modulus for each coordinate are different, say $a_1,\ldots,a_r$, then we can take the least common multiple of the $a_i$'s as the common modulus $a$ and write the original congruence condition in each coordinate as congruence conditions modulo $a$. By applying Theorem \ref{thmvlp} to these congruence conditions one by one, we can find the number of visible Lehmer points in this case as well.

However, one should not expect the distribution of visible points to be uniformly distributed into each congruence class (that can possibly have a visible point) if the modulus is different for each coordinate. For example, let $V=\mathbb{A}_p^2$, $r=2$ and $(a_1,a_2)=(2,3)$. We can take $a=6$. If $(b_1,b_2)=(1,0)$ the visible Lehmer points can fall into $4$ classes, namely
\[ (x,y)\equiv (1,0), (1,3), (5,0), (5,3) \pmod{6}.\]
On the other hand, if $(b_1,b_2)=(0,1)$, the visible Lehmer points can fall into only $3$ classes:
\[ (x,y)\equiv (0,1), (2,1), (4,1) \pmod{6}.\]
Therefore, by Theorem \ref{thmvlp}, the number of visible Lehmer points with respect to $(b_1,b_2)=(1,0)$ should be about $4/3$ times the number of visible Lehmer points with respect to $(b_1,b_2)=(0,1)$.
\end{remark}

\section{Preliminary results}

Let $e_p(x)=e^{2\pi ix/p}$. The following lemmas will be useful.

\begin{lemma}\label{lemexpsum}
Let $p$ be a prime, $\CI$ be an interval in $[0,1)$, and $0\leq b<a$ be integers such that $(a,p)=1$. Then
\begin{equation*}
\sum_{u\neq 0 \textup{~mod~$p$}}\abs{\sum_{ m\in p\CI, m\equiv b\textup{~(mod $a$)}}e_p(-um)}\leq 2p\log{p}.
\end{equation*}
\end{lemma}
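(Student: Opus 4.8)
The plan is to collapse the inner sum to a single geometric progression, bound it by a cosecant, and then sum a harmonic series after exploiting the coprimality of $a$ and $p$.

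First I would write $\CI=[\alpha,\beta)$, so that the integers $m\in p\CI$ with $m\equiv b\pmod a$ are precisely those of the form $m=b+ak$ with $k$ ranging over the integers of a fixed interval $J$ (independent of $u$) of length $(\beta-\alpha)p/a$. Pulling out the factor $e_p(-ub)$, which has modulus $1$, the inner sum becomes
\[
\abs{\sum_{m\in p\CI,\, m\equiv b\,(a)} e_p(-um)} = \abs{\sum_{k\in J} e_p(-uak)}.
\]
Since $(a,p)=1$ and $u\not\equiv 0\pmod p$, we have $ua\not\equiv 0\pmod p$, so the common ratio $e_p(-ua)$ is not $1$ and the standard estimate for a finite geometric progression yields
\[
\abs{\sum_{k\in J} e_p(-uak)} \le \frac{2}{\abs{e_p(-ua)-1}} = \frac{1}{\abs{\sin(\pi ua/p)}}.
\]

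Next I would pass from the sine to the distance-to-the-nearest-integer function $\|t\|$ through the elementary inequality $\abs{\sin(\pi t)}\ge 2\|t\|$, which bounds each term by $1/(2\|ua/p\|)$. The key arithmetic input is that, because $(a,p)=1$, the map $u\mapsto ua \bmod p$ is a bijection of $\{1,\ldots,p-1\}$ onto itself; hence summing over $u\not\equiv 0\pmod p$ is the same as summing over $v=ua\bmod p\in\{1,\ldots,p-1\}$, giving
\[
\sum_{u\neq 0\,(p)} \frac{1}{2\|ua/p\|} = \sum_{v=1}^{p-1}\frac{1}{2\|v/p\|}.
\]

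Finally I would evaluate this last sum explicitly. Writing $\|v/p\|=\min(v,p-v)/p$ and using the symmetry $v\leftrightarrow p-v$, the sum collapses to $p\sum_{1\le v\le (p-1)/2} 1/v$, a harmonic sum bounded by $p\bigl(1+\log((p-1)/2)\bigr)\le 2p\log p$ for every prime $p$. This gives the claimed inequality. The only step requiring genuine care is the bookkeeping of absolute constants, so that the final bound comes out as exactly $2p\log p$ rather than something marginally larger; the geometric-sum estimate and the $u\mapsto ua$ permutation are routine.
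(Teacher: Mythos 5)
Your proof is correct and follows essentially the same route as the paper: parametrize the arithmetic progression as $m=ak+b$, bound the resulting geometric sum by $1/\abs{\sin(\pi ua/p)}$, exploit the bijection $u\mapsto ua \bmod p$ granted by $(a,p)=1$, and finish with a harmonic sum. The only cosmetic difference is that you phrase the sine lower bound via the distance-to-nearest-integer function $\|ua/p\|$, whereas the paper works with the least absolute residue $s$ of $au$ modulo $p$; these are the same quantity up to the factor $p$, and both yield the stated bound $2p\log p$.
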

\begin{proof}
Write $m=ak+b$. Let $\alpha$ and $\beta$ be such that $m\in p\CI$ if and only if $\alpha\leq k\leq\beta$. Then

\begin{align*}
\abs{\sum_{\substack{m\in p\CI \\ m\equiv b\textup{~(mod $a$)}}}e_p(-um)} =\abs{\sum_{k=\alpha}^{\beta}e_p(-uak-ub)}&=\abs{e_p(-ub)\sum_{k=\alpha}^{\beta}e_p(-uak)} \\
&=\abs{\sum_{k=\alpha}^{\beta}e_p(-uak)} \\
&=\abs{e_p(-ua\alpha)\frac{1-e_p(-ua(\beta-\alpha+1))}{1-e_p(-ua)}} \\
&\leq \frac{2}{\abs{1-e_p(-ua)}} \\
&=\frac{2}{\abs{2\sin\left(\frac{\pi ua}{p}\right)}} \\
&=\frac{1}{\abs{\sin\left(\frac{\pi s}{p}\right)}},
\end{align*}

where $s$ is the least absolute residue of $au$ modulo $p$. As $\abs{\frac{\pi s}{p}}\leq\frac{\pi}{2}$, we have
\begin{equation*}
\abs{\sin\left(\frac{\pi s}{p}\right)}\geq\frac{2}{\pi}\frac{\pi \abs{s}}{p}=\frac{2\abs{s}}{p}.
\end{equation*}
So,
\begin{equation}\label{eqnlem1}
\abs{\sum_{\substack{m\in p\CI \\ m\equiv b\textup{~(mod $a$)}}}e_p(-um)}\leq \frac{p}{\abs{s}}.
\end{equation}
Since $s\equiv au\pmod{p}$ and $(a,p)=1$, the nonzero $s$ and $u$ are in $1-1$ correspondence. Summing \eqref{eqnlem1} over all nonzero $u$ mod $p$ and using the inequality
\begin{equation*}
1+\frac{1}{2}+\ldots+\frac{1}{\frac{p-1}{2}}\leq\log{p},
\end{equation*}
we get the desired inequality.
\end{proof}

The second lemma is the Lang-Weil bound on the number of rational points on an irreducible affine variety. For the original proof see Lang-Weil \cite{LaWe54}.
\begin{lemma}\label{lemLW}
Let $V\subseteq\mathbb{A}_p^r$ be an irreducible affine variety over $\mathbb{F}_p$ of dimension $n$ and degree $d$, then
\begin{equation*}
\#V(\mathbb{F}_p)=p^n+O((d-1)(d-2)p^{n-\frac{1}{2}}).
\end{equation*}
\end{lemma}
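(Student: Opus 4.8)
The plan is to follow the strategy of Lang and Weil: first establish the bound for curves using the Riemann hypothesis for curves over finite fields, and then reduce the general case to the curve case by an inductive hyperplane-slicing argument. Throughout I take as known the deep input that for a smooth projective absolutely irreducible curve $\tilde{C}$ of genus $g$ over $\mathbb{F}_p$ one has $\abs{\#\tilde{C}(\mathbb{F}_p)-(p+1)}\leq 2g\sqrt{p}$.

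For the base case $n=1$, given an absolutely irreducible affine curve $C$ of degree $d$, I would pass to its projective closure and then to its normalization $\tilde{C}$. A generic birational projection of $C$ onto a plane curve of the same degree $d$ shows that the geometric genus satisfies $g\leq(d-1)(d-2)/2$, so $2g\leq(d-1)(d-2)$. The Weil bound then yields the count for $\tilde{C}$, and the difference between $\#\tilde{C}(\mathbb{F}_p)$ and $\#C(\mathbb{F}_p)$ comes only from points at infinity and the finitely many singular points, a total of $O(d^2)$ points; being constant in $p$, this is absorbed into the error term $O((d-1)(d-2)p^{1/2})$.

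For the inductive step $n\geq 2$, I would fiber $V$ over the affine line by a generic linear form $\ell\colon\mathbb{A}^r\to\mathbb{A}^1$ and write $\#V(\mathbb{F}_p)=\sum_{t\in\mathbb{F}_p}\#V_t(\mathbb{F}_p)$, where $V_t=V\cap\{\ell=t\}$. An effective Bertini irreducibility theorem guarantees that for all but $O(1)$ values of $t$ the section $V_t$ is an absolutely irreducible variety of dimension $n-1$ and degree $d$; for these good fibers the inductive hypothesis gives $\#V_t(\mathbb{F}_p)=p^{n-1}+O((d-1)(d-2)p^{n-3/2})$. Summing over the $\sim p$ good values produces the main term $p^n$ together with the error $O((d-1)(d-2)p^{n-1/2})$, while each of the $O(1)$ bad fibers has dimension at most $n-1$ and so contributes only $O(p^{n-1})$ points by the trivial bound; since $p^{n-1}\ll p^{n-1/2}$, these are absorbed as well.

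The main obstacle is the inductive step rather than the base case. I must ensure that generic hyperplane sections of an absolutely irreducible variety of dimension at least $2$ remain absolutely irreducible of the expected dimension, that the number of exceptional hyperplanes is bounded independently of $p$, and that the degree is preserved exactly under generic slicing so that the constant $(d-1)(d-2)$ propagates unchanged through the induction. This quantitative Bertini-type control, together with the verification that the crude contribution of the bad fibers stays below $p^{n-1/2}$, is the delicate part that makes the scheme go through.
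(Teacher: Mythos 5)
The paper offers no proof of this lemma at all---it simply cites Lang--Weil \cite{LaWe54}---and your outline is precisely the argument of that original paper: the Weil bound together with the genus estimate $g\leq(d-1)(d-2)/2$ obtained by plane projection for the curve case, followed by induction on dimension via hyperplane slicing controlled by an effective Bertini theorem. Your sketch is correct, and you have rightly identified the quantitative Bertini statement (the number of non-absolutely-irreducible slices bounded independently of $p$) as the crux; that statement is exactly the technical heart of \cite{LaWe54}, so your proposal matches the cited proof rather than replacing it with something new.
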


The next lemma is the Bombieri-Deligne's estimate of exponential sums over an irreducible affine variety, which follows from the work of Deligne \cite{Del74,Del80} and Bombieri \cite{Bom78} (see also \cite{Bom66} for the case of a curve).
\begin{lemma}\label{lembom}
Let $V\subseteq\mathbb{A}_p^r$ be an irreducible affine variety over $\mathbb{F}_p$ of dimension $n$ and degree $d$, not contained in any hyperplane. If $(u_1,\ldots,u_r)$ is nonzero modulo $p$, then
\begin{equation*}
\abs{\sum_{(x_1,\ldots,x_r)\in V}e_p(u_1x_1+\ldots+u_rx_r)}\leq (4d+9)^{n+r}p^{n-\frac{1}{2}}.
\end{equation*}
\end{lemma}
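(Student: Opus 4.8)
The plan is to interpret the character sum cohomologically and then invoke Deligne's form of the Riemann Hypothesis over finite fields together with a uniform bound on Betti numbers. Write $\psi=e_p$ for the standard additive character of $\mathbb{F}_p$, and let $\ell(x_1,\ldots,x_r)=u_1x_1+\ldots+u_rx_r$ be the linear form attached to $\mathbf{u}=(u_1,\ldots,u_r)$. Since $\mathbf{u}\neq 0$ and $V$ is not contained in any hyperplane, $\ell$ restricts to a non-constant function on $V$. Let $\mathcal{L}_\psi(\ell)$ denote the Artin--Schreier sheaf on $V$ obtained by pulling back the rank-one Artin--Schreier sheaf on $\mathbb{A}^1$ along $\ell\colon V\to\mathbb{A}^1$. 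By the Grothendieck--Lefschetz trace formula,
\begin{equation*}
\sum_{\mathbf{x}\in V(\mathbb{F}_p)}e_p(\ell(\mathbf{x})) = \sum_{i=0}^{2n}(-1)^i\,\mathrm{Tr}\bigl(\mathrm{Frob}_p \mid H^i_c(V\otimes\overline{\mathbb{F}}_p,\mathcal{L}_\psi(\ell))\bigr),
\end{equation*}
so everything reduces to controlling these compactly supported cohomology groups.

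Next I would record two facts. First, because $\ell$ is non-constant on the (geometrically) irreducible variety $V$, the sheaf $\mathcal{L}_\psi(\ell)$ has nontrivial geometric monodromy; a standard argument (the top group computes the geometric coinvariants of a nontrivial character) then gives $H^{2n}_c=0$. Second, $\mathcal{L}_\psi(\ell)$ is pure of weight zero, so Deligne's Weil~II bounds every Frobenius eigenvalue on $H^i_c$ in absolute value by $p^{i/2}$. Combining these with the trace formula yields
\begin{equation*}
\abs{\sum_{\mathbf{x}\in V(\mathbb{F}_p)}e_p(\ell(\mathbf{x}))} \leq p^{n-\frac12}\sum_{i=0}^{2n-1}\dim H^i_c\bigl(V\otimes\overline{\mathbb{F}}_p,\mathcal{L}_\psi(\ell)\bigr),
\end{equation*}
which already exhibits square-root cancellation; the only remaining task is to bound the total Betti number by $(4d+9)^{n+r}$.

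This last step is the crux, and I expect it to be the main obstacle. The vanishing of $H^{2n}_c$ and the weight bound are formal consequences of Deligne's theorem once non-triviality of $\mathcal{L}_\psi(\ell)$ is in hand, so they produce \emph{some} constant automatically; the explicit shape $(4d+9)^{n+r}$ requires a uniform estimate for $\sum_i\dim H^i_c$ in terms of the degree $d$ and the number $r$ of ambient coordinates. The approach I would take follows Bombieri: slice $V$ by a pencil of hyperplanes (equivalently, fiber it over a line via a generic linear projection) to reduce the dimension by one while controlling the growth of the degree, and iterate down to the case of a curve, where the relevant bound is governed by the genus and hence by the degree via Weil's Riemann Hypothesis for curves. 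Tracking how the degree and codimension propagate through each slicing step, and accounting for the $r$ ambient variables, produces a bound of the stated exponential-in-$(n+r)$ form. An alternative, which is Bombieri's original route, is to write the $L$-function associated with the sum as a rational function and bound its total degree (numerator plus denominator) directly in terms of $d$ and $r$, thereby bounding the number of Frobenius eigenvalues without explicit recourse to cohomology; this leads to the same kind of estimate. Either way, the genuinely difficult ingredient is the uniform, degree-explicit control of the Betti numbers, whereas the cancellation itself is soft.
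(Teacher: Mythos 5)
The first thing to note is that the paper does not prove Lemma \ref{lembom} at all: it is quoted as a known result, with the proof residing in Bombieri \cite{Bom78} and Deligne \cite{Del74,Del80}. Your outline reconstructs exactly the argument behind that citation --- the Grothendieck--Lefschetz trace formula applied to the Artin--Schreier sheaf $\mathcal{L}_\psi(\ell)$, vanishing of $H^{2n}_c$, the Weil II weight bound, and then a degree-explicit bound on the total dimension of compactly supported cohomology --- so you are on the same route as the paper's sources. But as a self-contained proof your attempt has a genuine gap, and it is the one you yourself flag: the constant $(4d+9)^{n+r}$ is the entire quantitative content of the lemma, and you only gesture at how it would be obtained (hyperplane slicing, or bounding the total degree of the associated $L$-function). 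Everything you actually establish --- square-root cancellation with \emph{some} constant depending on $V$ --- is the soft part; without executing Bombieri's argument, the lemma as stated is not proved but only reduced to \cite{Bom78}. Since the paper performs the same reduction by citation, this is not a defect relative to the paper, but it would be more honest to present this step as a citation than as a proof sketch whose key estimate is missing.

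There is also an imprecision in the soft part. You claim that since $\ell$ is non-constant on $V$, the sheaf $\mathcal{L}_\psi(\ell)$ has nontrivial geometric monodromy, whence $H^{2n}_c=0$. Non-constancy is not the right criterion: $\mathcal{L}_\psi(f)$ is geometrically trivial precisely when $f=g^p-g+c$ for some function $g$ on a dense open of $V$ and some constant $c$, and such $f$ can certainly be non-constant --- in that case $e_p(f(x))=1$ at every $\mathbb{F}_p$-point of $V$ and there is no cancellation at all, so any argument that only uses non-constancy must fail. What saves you is that $\ell$ is \emph{linear}: along the divisor at infinity of a projective closure of $V$ it has a pole of order $1$, while any function of the form $g^p-g+c$ has all its pole orders divisible by $p$; hence $\ell\neq g^p-g+c$, the sheaf is geometrically nontrivial, and the coinvariants computing $H^{2n}_c$ vanish. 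The hypothesis that $V$ lies in no hyperplane enters exactly where you put it (it forces $\ell$ to be non-constant on $V$), but the passage from non-constancy to nontrivial monodromy needs this Artin--Schreier argument, not merely the irreducibility of $V$.
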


The above estimate requires almost no assumptions on the variety $V$ (apart from that $V$ is absolutely irreducible). We can have a much better estimate if we assume some mild conditions on $V$, and use Katz's estimation \cite{Kat99}. Assume that $\dim V\geq 2$, we recall from Section \ref{subsecvis} that $X$ is the homogenization of $V$ using the variable $x_0$, and $L$, $H_{\mathbf{u}}$ is defined by \eqref{defL} and \eqref{defH} respectively. Let $\delta_{\mathbf{u}}$ denote the dimension of the singular locus of $X\cap L\cap H_{\mathbf{u}}$. We have the following estimate of exponential sums in terms of $\delta_{\mathbf{u}}$ \cite[Theorem 5]{Kat99}.

\begin{lemma}\label{lemKatz}
Let $V\subseteq\mathbb{A}_p^r$ be an irreducible affine variety over $\mathbb{F}_p$ of dimension $n$ and degree $d$, not contained in any hyperplane, and let $X$ be its homogenization. Let $\mathbf{u}=(u_1,\ldots,u_r)$ be a nonzero vector modulo $p$, and $L$, $H_{\mathbf{u}}$ as in \eqref{defL} and \eqref{defH}. Suppose that $X\cap L\cap H_{\mathbf{u}}$ has dimension $n-2$, and let $\delta_{\mathbf{u}}$ be the dimension of its singular locus, then
\begin{equation*}
\abs{\sum_{(x_1,\ldots,x_r)\in V}e_p(u_1x_1+\ldots+u_rx_r)}\leq (4d+9)^{n+r}p^{\frac{n+1+\delta_{\mathbf{u}}}{2}}.
\end{equation*}
\end{lemma}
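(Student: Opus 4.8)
The plan is to recognize this statement as a direct application of Katz's estimate \cite[Theorem 5]{Kat99}, so that the real work is to check that our geometric setup is precisely the one his theorem requires. First I would rewrite the left-hand side as the exponential sum $\sum_{\mathbf{x}\in V(\mathbb{F}_p)}\psi(\ell_{\mathbf{u}}(\mathbf{x}))$, where $\psi=e_p$ is the standard additive character and $\ell_{\mathbf{u}}(\mathbf{x})=u_1x_1+\ldots+u_rx_r$ is the linear form attached to $\mathbf{u}$. Since $V$ is absolutely irreducible, not contained in any hyperplane, and $\mathbf{u}\neq 0$, the form $\ell_{\mathbf{u}}$ is nonconstant on $V$, so this is a genuine character sum rather than $\#V(\mathbb{F}_p)$ times a root of unity.

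By the Grothendieck--Lefschetz trace formula, this sum equals the alternating sum of traces of Frobenius acting on the compactly supported $\ell$-adic cohomology groups $H^i_c(V_{\overline{\mathbb{F}}_p},\mathcal{L}_\psi(\ell_{\mathbf{u}}))$, where $\mathcal{L}_\psi$ is the Artin--Schreier sheaf. By Deligne's theory the Frobenius eigenvalues on $H^i_c$ have absolute value at most $p^{i/2}$; the content of Katz's theorem is that, under a transversality hypothesis at infinity, the weights drop below the generic value and the sum is controlled by the singularities of the hyperplane section at infinity. Concretely, Katz relates the top weight that can occur, hence the exponent of $p$, to the dimension $\delta_{\mathbf{u}}$ of the singular locus of $X\cap L\cap H_{\mathbf{u}}$: a $\delta_{\mathbf{u}}$-dimensional singular locus at infinity can push cohomology up to weight $n+1+\delta_{\mathbf{u}}$, and purity there yields eigenvalues of size at most $p^{(n+1+\delta_{\mathbf{u}})/2}$. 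The number of surviving Frobenius eigenvalues (a sum of the relevant Betti numbers) is bounded purely in terms of $n$, $r$, and $d$, and the standard bookkeeping produces the explicit constant $(4d+9)^{n+r}$, the same constant already appearing in Lemma \ref{lembom}.

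Thus the only thing I would verify by hand is that the hypotheses of \cite[Theorem 5]{Kat99} hold in our setting: that $V$ is absolutely irreducible of dimension $n$, that it spans the ambient affine space, and --- crucially --- that the triple intersection $X\cap L\cap H_{\mathbf{u}}$ at infinity has the expected dimension $n-2$. The first two are standing assumptions on $V$, and the dimension of the intersection is exactly the hypothesis imposed in the statement. With these in place, Katz's theorem applies and yields the asserted bound.

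The main obstacle is not in our argument but in Katz's theorem itself, which is a deep cohomological result invoked here as a black box. On our side the only genuine care is in matching conventions: ensuring that the objects $L$ and $H_{\mathbf{u}}$ defined in \eqref{defL} and \eqref{defH} coincide with Katz's hyperplane at infinity and the hyperplane cut out by $\ell_{\mathbf{u}}$, and that the dimension of the singular locus is taken with the same convention --- with $\delta_{\mathbf{u}}=-1$ when the section is smooth, recovering the square-root-cancellation exponent $p^{n/2}$, and with $\delta_{\mathbf{u}}=n-2$ in the maximally singular case, recovering the bound $p^{n-\frac12}$ of Lemma \ref{lembom}.
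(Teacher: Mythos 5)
Your proposal takes exactly the paper's route: the paper offers no proof of this lemma at all, but states it as a direct quotation of Katz's result \cite[Theorem 5]{Kat99} translated into the notation of \eqref{defL} and \eqref{defH}, which is precisely your plan of matching conventions, verifying the dimension hypothesis on $X\cap L\cap H_{\mathbf{u}}$, and invoking Katz as a black box. Your additional cohomological sketch and sanity checks (exponent $p^{n/2}$ when $\delta_{\mathbf{u}}=-1$, and $p^{n-\frac{1}{2}}$ when $\delta_{\mathbf{u}}=n-2$, consistent with Lemma \ref{lembom}) are correct but go beyond what the paper records.
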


\section{Lehmer points on a box: proof of Theorem \ref{mainthm}} \label{secmainthm}

From the orthogonality of exponential sums, we have
\begin{equation*}
\sum_{u_j \textup{~mod $p$}}e_p(u_j(n_j-m_j)) =
\begin{cases}
p, & \text{~if~} n_j=m_j, \\
0, & \text{~otherwise,}
\end{cases}
\end{equation*}
so if $\CI_j$ are intervals inside $[0,1)$, we have
\begin{equation*}
\sum_{\substack{m_j\in p\CI_j \\ m_j\equiv b_j\textup{~(mod $a_j$)}}}\sum_{u_j \textup{~mod $p$}}e_p(u_j(n_j-m_j)) =
\begin{cases}
p, & \text{~if~} n_j\equiv b_j \pmod{a_j} \text{~and~} n_j\in p\CI_j, \\
0, & \text{~otherwise.}
\end{cases}
\end{equation*}
Therefore, the number of Lehmer points on $V$ lying in a box $p\Omega$ with $\Omega=\CI_1\times\ldots\times\CI_r\subseteq[0,1)^r$ can be written as an exponential sum.
\begin{align*}
\#\CL_{\Omega}(p,r,V,\mathbf{a},\mathbf{b}) &= \frac{1}{p^r}\sum_{\substack{(x_1,\ldots,x_r)\in V \\ u_j \textup{~mod~$p$}\\ m_j\in p\CI_j, m_j\equiv b_j\textup{~(mod $a_j$)}}} e_p(u_1(x_1-m_1)+\ldots+u_r(x_r-m_r)) \\
&=\frac{1}{p^r}\sum_{u_j \textup{~mod~$p$}}\sum_{ m_j\in p\CI_j, m_j\equiv b_j\textup{~(mod $a_j$)}}e_p(-u_1m_1-\ldots-u_rm_r) \\
&\qquad\times\sum_{(x_1\ldots,x_r)\in V}e_p(u_1x_1+\ldots+u_rx_r) \\
&=\frac{1}{p^r}\sum_{u_1 \textup{~mod~$p$}}\cdots\sum_{u_r \textup{~mod~$p$}} \prod_{j=1}^r\left(\sum_{ m_j\in p\CI_j, m_j\equiv b_j\textup{~(mod $a_j$)}}e_p(-u_jm_j)\right) \\
&\qquad\times\sum_{(x_1\ldots,x_r)\in V}e_p(u_1x_1+\ldots+u_rx_r) \\
&=M+E,
\end{align*}
where $M$ is the sum of the terms with $(u_1,\ldots,u_r)=(0,\ldots,0)$ and $E$ is the sum of remaining terms.

For the main term $M$, we have
\begin{align}
M &= \frac{1}{p^r}\prod_{j=1}^r\left(\frac{\abs{p\CI_j}}{a_j}+O(1)\right)\#V(\mathbb{F}_p) \nonumber \\
&= \abs{\CI_1}\ldots\abs{\CI_r}\cdot\frac{\#V(\mathbb{F}_p)}{a_1\ldots a_r}(1+O(1/p)) \label{eqnmain} \\
&= {\rm vol}(\Omega)\cdot\frac{\#V(\mathbb{F}_p)}{a_1\ldots a_r}(1+O(1/p)).
\end{align}
Applying the Lang-Weil bound (Lemma \ref{lemLW}), we see that the main term is given by
\begin{equation}\label{eqnmain1}
M={\rm vol}(\Omega)\cdot\frac{p^n}{a_1\ldots a_r}+O((d-1)(d-2)p^{n-\frac{1}{2}}).
\end{equation}

The remaining terms can be estimated using Lemma \ref{lemexpsum} and Lemma \ref{lembom}.
\begin{align}
\abs{E} &\leq \frac{1}{p^r}(2p\log p+p)^r((4d+9)^{n+r}p^{n-\frac{1}{2}}) \nonumber \\
&\leq 2^r(4d+9)^{n+r} p^{n-\frac{1}{2}}(\log{p}+1)^r, \label{eqnerr1}
\end{align}
where in the first inequality, the factor $(2p\log p + p)^r$ is used instead of $(2p\log p)^r$ in order to bound also those terms where some (but not all) of the $u_j$'s are zero. Combining \eqref{eqnmain1} and \eqref{eqnerr1}, we obtain
\begin{equation*}
\#\CL_{\Omega}(p,r,V,\mathbf{a},\mathbf{b}) = {\rm vol}(\Omega)\cdot\frac{p^n}{a_1\ldots a_r}+O(2^r(4d+9)^{n+r}p^{n-\frac{1}{2}}\log^r{p}).
\end{equation*}

This proves the first formula of Theorem \ref{mainthm}. The second formula follows from the first one by a general theorem relating the box discrepancies with the discrepancies of general regions with piecewise smooth boundaries. We refer the reader to the papers of Laczkovich \cite{Lac95} and Weyl \cite{Wey39} for details.

\begin{remark}
If $V$ is an affine curve, we can use the bound in \cite[Theorem 6]{Bom66} instead of \cite{Bom78} to improve Lemma \ref{lembom}, thereby improving the error terms in Theorem \ref{mainthm}. In this case, we have
\begin{equation*}
\#\CL_{\Omega}(p,r,V,\mathbf{a},\mathbf{b}) = {\rm vol}(\Omega)\cdot\frac{p}{a_1\ldots a_r}+O(2^r d^2 p^{\frac{1}{2}}\log^r{p})
\end{equation*}
when $\Omega$ is a box $\CI_1\times\ldots\times\CI_r$, and for general region $\Omega$ with piecewise smooth boundary, we have
\begin{equation*}
\#\CL_{\Omega}={\rm vol}(\Omega)\cdot\frac{p}{a_1\ldots a_r}+O_{\Omega}(d^\frac{2}{r} p^{1-\frac{1}{2r}}\log{p}).
\end{equation*}
These formula also make precise the dependence on $r$ and $d$ for the corresponding formula in \cite[Theorem 1]{CoZa01}.
\end{remark}

\section{Visible points on $V$: proof of Theorem \ref{mainthm2} and Theorem \ref{mainthm3}} \label{secvis}

To simplify notation, we write $\CN_{\Omega}$ to be the number of visible points in $p\Omega$, with $p$, $r$ and $V$ understood. For any positive integer $d$, define
\begin{multline*}
\CM_{\Omega}(k)=\#\{ (x_1,\ldots,x_r)\in (V\cap\Omega)-(0,\ldots,0) | \\
0\leq x_j\leq p-1, k \text{~divides~} \text{GCD}(x_1,\ldots,x_r) \}.
\end{multline*}
Note that $\CM_{\Omega}(k)=0$ if $k>p$. We have
\begin{equation}\label{eqnCNCM}
\CN_{\Omega}=\sum_{k=1}^{\infty}\mu(k)\CM_{\Omega}(k),
\end{equation}
where $\mu(k)$ is the M\"{o}bius function. Note that $k$ divides GCD$(x_1,\ldots,x_r)$ means that $x_j\equiv 0\pmod{k}$, so if $k$ is small compared to $p$, we can use Theorem \ref{mainthm} to estimate $\CM_{\Omega}(k)$. We have
\begin{equation}\label{eqnCM}
\CM_{\Omega}(k)={\rm vol}(\Omega)\cdot\frac{p^n}{k^r}+O_{V,r}(p^{n-\frac{1}{2}}\log^r{p})
\end{equation}
for such $k$. We now fix a number $K<p$ whose value will be determined later. For $k\leq K$ we apply the estimate \eqref{eqnCM} to \eqref{eqnCNCM}, and obtain
\begin{equation}\label{eqnCN1}
\CN_{\Omega}={\rm vol}(\Omega)p^n\sum_{k\leq K}\frac{\mu(k)}{k^r}+O_{V,r}(Kp^{n-\frac{1}{2}}\log^r{p})+O\left(\sum_{K<k\leq p}\CM_{\Omega}(k)\right).
\end{equation}
The main term is
\begin{align*}
{\rm vol}(\Omega)p^n\sum_{k\leq K}\frac{\mu(k)}{k^r} &= \abs{\CI_1}\ldots\abs{\CI_r}p^n\left(\frac{1}{\zeta(r)}+O\left(\frac{1}{K^{r-1}}\right)\right) \\
&= {\rm vol}(\Omega)\cdot\frac{p^n}{\zeta(r)}+O\left(\frac{p^n}{K^{r-1}}\right).
\end{align*}

The last term of \eqref{eqnCN1} can be estimated as follows. For any $k$, a point $P=(x_1,\ldots,x_r)\in V\cap\Omega$ satisfies $k|\text{GCD}(x_1,\ldots,x_r)$ if and only if $P$ can be written as $P=(ky_1,\ldots,ky_r)$, where $(y_1,\ldots,y_r)\in\frac{1}{k}\Omega$. Note that for any $K<k\leq p$, we have $\frac{1}{k}\Omega\subseteq[1,p/K]^r$. 
Now fix a point $(y_1,\ldots,y_r)\in[1,p/K]^r$. If $V$ is defined by the polynomials $f_1(x_1,\ldots,x_r),\ldots,f_m(x_1,\ldots,x_r)$, then the values of $k$ so that $(ky_1,\ldots,k_r)\in V$ are those with
\begin{equation*}
f_j(ky_1,\ldots,ky_r)\equiv 0 \pmod{p}, ~1\leq j\leq m.
\end{equation*}
For a fix $(y_1,\ldots,y_r)$, the number of such $k$ is finite (it is at most the degree of $f_1$). Thus,
\begin{equation*}
\sum_{K<k\leq p}\CM_{\Omega}(k)=O\left(\frac{p^r}{K^r}\right).
\end{equation*}

We now put these back to \eqref{eqnCN1} gives
\begin{equation}\label{eqnCN2}
\CN_{\Omega}={\rm vol}(\Omega)\cdot\frac{p^n}{\zeta(r)} +O_{V,r}(Kp^{n-\frac{1}{2}}\log^r{p}) +O\left(\frac{p^r}{K^{r}}\right).
\end{equation}
Finally, we balance the error terms by choosing $K$ such that
\begin{equation*}
Kp^{n-\frac{1}{2}} = \frac{p^r}{K^{r}}.
\end{equation*}
This gives $K=p^{\frac{r-n+1/2}{r+1}}$. Inserting this back in \eqref{eqnCN2} yields
\begin{equation*}
\CN_{\Omega}(p,r,V)={\rm vol}(\Omega)\cdot\frac{p^n}{\zeta(r)} +O_{V,r}(p^{\frac{r}{r+1}(n+\frac{1}{2})}\log^{r}p).
\end{equation*}
To complete the proof of Theorem \ref{mainthm2} it remains to note that the main term dominates the $O$-terms when $n> r/2$.

The above estimate is not strong enough for $n\leq r/2$. To obtain a stronger estimation, notice that the error term in \eqref{eqnCM} comes from two parts, namely the Lang-Weil bound (Lemma \ref{lemLW}) and the Bombieri estimate (Lemma \ref{lembom}). To cope with the Lang-Weil bound we use \eqref{eqnmain} for the main term, and to improve the Bombieri estimate we will use Katz's estimate (Lemma \ref{lemKatz}). This explains why we require some mild conditions on $V$ in Theorem \ref{mainthm3}.

Using the idea in Section \ref{secmainthm}, we can write
\begin{equation*}
\CM_{\Omega}(k) = M+E,
\end{equation*}
where
\begin{equation*}
M={\rm vol}(\Omega)\cdot\frac{\#V(\mathbb{F}_p)}{k^r}(1+O(1/p)),
\end{equation*}
and
\begin{equation*}
E = \frac{1}{p^r}\prod_{j=1}^r\sum_{\substack{\mathbf{u}\neq 0\\ m_j\in p\CI_j, m_j\equiv b_j\textup{~(mod $a_j$)}}}e_p(-u_jm_j) \sum_{(x_1\ldots,x_r)\in V}e_p(u_1x_1+\ldots+u_rx_r),
\end{equation*}
where $\mathbf{u}=(u_1,\ldots,u_r)$. Let $X$ be the homogenization of $V$ by the variable $x_0$. Recall that
\begin{equation*}
L = \{ \mathbf{x}=(x_0,\ldots,x_r)\in X | x_0=0 \},
\end{equation*}
and for any nonzero $\mathbf{u}=(u_1,\ldots,u_r)$,
\begin{equation*}
H_{\mathbf{u}} = \{ \mathbf{x}\in X | u_1x_1+\ldots+u_rx_r=0 \}.
\end{equation*}
Suppose that $X\cap L\cap H_{\mathbf{u}}$ has dimension $n-2$ for all $\mathbf{u}$, then we can apply Lemma \ref{lemKatz}. Set
\[\delta=\max_{\mathbf{u}\neq 0}\delta_{\mathbf{u}},\]
then
\begin{align*}
E &= O(\frac{1}{p^r}(p\log p)^r (4d+9)^{n+r}p^{\frac{n+1+\delta}{2}}) \\
&= O_{V,r}(p^{\frac{n+1+\delta}{2}}\log^r p).
\end{align*}
Hence,
\begin{equation*}
\CM_{\Omega}(k) = {\rm vol}(\Omega)\cdot\frac{\#V(\mathbb{F}_p)}{k^r}+O_{V,r}(p^{\frac{n+1+\delta}{2}}\log^r p).
\end{equation*}
Using this and following the same calculation as in the first part of this section, we get
\begin{align*}
\CN_{\Omega}(p,r,V)&={\rm vol}(\Omega)\cdot\frac{\#V(\mathbb{F}_p)}{\zeta(r)} +O_{V,r}(p^{\frac{(n+3+\delta)r}{2(r+1)}}\log^{r}p) \\
&={\rm vol}(\Omega)\cdot\frac{p^n}{\zeta(r)} \\
&\qquad +O_{V}(p^{n-\frac{1}{2}}) +O_{V,r}(p^{\frac{(n+3+\delta)r}{2(r+1)}}\log^{r}p).
\end{align*}
Notice that if $\delta \leq n-3$, then the main term dominates the $O$-terms. This completes the proof of Theorem \ref{mainthm3}.

\section{An averaging result: proof of Theorem \ref{thmavg}}

Let $\CS$ be the set of $\mathbf{c}$ so that $V_{\mathbf{c}}$ is absolutely irreducible. Then from \eqref{eqnCN1} and the calculation of its main term, for any $\mathbf{c}\in\CS$, we have
\begin{multline}\label{eqnavg1}
\abs{\CN(\mathbf{c})-{\rm vol}(\Omega)\cdot\frac{p^n}{\zeta(r)}} \ll_{V,r} Kp^{n-\frac{1}{2}}\log^r{p} \\
+\frac{p^n}{K^{r-1}}+\sum_{K<k\leq p}\CM_{\Omega,V_{\mathbf{c}}}(k).
\end{multline}
Notice that for distinct $\mathbf{c}$, the varieties $V_{\mathbf{c}}$ are disjoint as sets. Therefore,
\begin{equation*}
\sum_{\mathbf{c}\in\mathbb{F}_p^m} \CM_{\Omega,V_{\mathbf{c}}}(k) \leq \frac{\abs{p\CI_1}\ldots\abs{p\CI_r}}{k^r},
\end{equation*}
and hence
\begin{equation*}
\sum_{\mathbf{c}\in\mathbb{F}_p^m}\sum_{K<k\leq p} \CM_{\Omega,V_{\mathbf{c}}}(k) \leq \sum_{K<k\leq p}\frac{\abs{p\CI_1}\ldots\abs{p\CI_r}}{k^r} \leq \frac{p^r}{K^{r-1}}.
\end{equation*}
Putting this into \eqref{eqnavg1}, we have
\begin{equation*}
\sum_{\mathbf{c}\in\CS} \abs{\CN(\mathbf{c})-{\rm vol}(\Omega)\cdot\frac{p^n}{\zeta(r)}} \ll_{V,r} Kp^{n+m-\frac{1}{2}}\log^r{p}+\frac{p^r}{K^{r-1}}.
\end{equation*}
We balance the error term by choosing $K=p^{1-\frac{1}{r}(n+m-\frac{1}{2})}\log^{-1}p$, and obtain
\begin{equation}\label{eqnCS1}
\sum_{\mathbf{c}\in\CS} \abs{\CN(\mathbf{c})-{\rm vol}(\Omega)\cdot\frac{p^n}{\zeta(r)}} \ll_{V,r} p^{(n+m-\frac{1}{2})(1-\frac{1}{r})+1}\log^{r-1}{p}.
\end{equation}
For $\mathbf{c}\notin\CS$, we estimate $\CN(\mathbf{c})$ trivially as
\begin{equation*}
\sum_{\mathbf{c}\notin\CS}\CN(\mathbf{c}) \leq p^r.
\end{equation*}
As $\abs{\CS}=O(p^{m-1})$, we have
\begin{equation}\label{eqnCS2}
\sum_{\mathbf{c}\notin\CS}\abs{\CN(\mathbf{c})-{\rm vol}(\Omega)\cdot\frac{p^n}{\zeta(r)}} \ll_{V} p^r + p^{n+m-1}.
\end{equation}
Combining \eqref{eqnCS1} and \eqref{eqnCS2}, we complete the proof of Theorem \ref{thmavg}.

\section{Visible Lehmer points: proof of Theorem \ref{thmvlp}}

Similar to Section \ref{secvis}, we write $\CN'_{\Omega}$ to be the number of visible Lehmer points in $\Omega$, with $p$, $r$, $V$ and $a$ understood. Define
\begin{multline}\label{eqndefCM'}
\CM'_{\Omega}(k)=\#\{ (x_1,\ldots,x_r)\in (V\cap\Omega)-(0,\ldots,0) |\\
0\leq x_j\leq p-1, k \text{~divides~} \text{GCD}(x_1,\ldots,x_r), x_j\equiv b_j\pmod{a} \}.
\end{multline}
We have
\begin{equation}\label{eqnCN'CM'}
\CN'_{\Omega}=\sum_{k=1}^{p-1}\mu(k)\CM'_{\Omega}(k).
\end{equation}
The conditions in \eqref{eqndefCM'} amount to
\begin{equation}\label{sysCM'}
\begin{aligned}
x_j &\equiv 0 \pmod{k}, \\
x_j &\equiv b_j \pmod{a}.
\end{aligned}
\end{equation}
Let $g=\text{GCD}(k,a)$. If $g=1$ \eqref{sysCM'} is equivalent to a system of congruences modulo $ka$, while if $g>1$ \eqref{sysCM'} has no solution if $b_j\not\equiv 0\pmod{g}$, and is equivalent to a system of congruences modulo $ka/g$ if $b_j\equiv 0 \pmod{g}$. Since we assume that $\text{GCD}(b_1,\ldots,b_r,a)=1$, we have $\CM'_{\Omega}(k)=0$ if $g>1$. For $g=1$, we use Theorem \ref{mainthm} to obtain
\begin{equation*}
\CM'_{\Omega}(k) = {\rm vol}(\Omega)\cdot\frac{p^n}{(ka)^r}+O_{V,r}(p^{n-\frac{1}{2}}\log^r{p}).
\end{equation*}
Fix a number $K<p$ which will be determined later. For $k<K$ we insert the above estimation into \eqref{eqnCN'CM'} to obtain
\begin{equation*}
\CN'_{\Omega}=\sum_{\substack{k\leq K \\ (k,q)=1}}\mu(k){\rm vol}(\Omega)\cdot\frac{p^n}{k^r a^r}+O_{V,r}(Kp^{n-\frac{1}{2}}\log^r{p})+O\left(\sum_{K<k\leq p}\CM'_{\Omega}(k)\right).
\end{equation*}
The main term is
\begin{align*}
\sum_{\substack{k\leq K \\ (k,a)=1}}\mu(k){\rm vol}(\Omega)\cdot\frac{p^n}{k^r a^r} &= {\rm vol}(\Omega)\cdot\frac{p^n}{a^r}\sum_{\substack{k\leq K \\ (k,a)=1}}\frac{\mu(k)}{k^r} \\
&= {\rm vol}(\Omega)\cdot\frac{p^n}{a^r}\left(\sum_{\substack{k=1 \\ (k,a)=1}}^{\infty}\frac{\mu(k)}{k^r}+O\left(\frac{1}{K^{r-1}}\right)\right) \\
&= {\rm vol}(\Omega)\cdot p^n\left(\frac{1}{\zeta(r)\phi_r(a)}+O\left(\frac{1}{K^{r-1}}\right)\right),
\end{align*}
where the last step can be obtained using the Euler product of the series $\sum_{n=1}^{\infty}\frac{\mu(k)}{k^r}$. The treatment of the error terms is the same as that in Section \ref{secvis}. This completes the proof of the first formula in Theorem \ref{thmvlp}. The second formula of the theorem follows from the above calculation of main term and the estimation of error terms in the second part of Section \ref{secvis}.

\subsection*{Acknowledgements}

The authors are grateful to the referee for many valuable suggestions.


\end{document}